\theoremstyle{plain}
\newtheorem{thm}{Theorem}[section]
\newtheorem{lem}[thm]{Lemma}
\newtheorem{cor}[thm]{Corollary}
\numberwithin{sublem}{thm} 
\numberwithin{equation}{section}
\renewcommand{\mod}[1]{\  (\text{mod }{#1})}
\renewcommand*{\d}{\, \textup{d}}
\renewcommand{\gcd}{\operatorname{gcd}}
\newcommand{\e}{\varepsilon}
\newcommand{\Mod}[1]{\ (\mathrm{mod}\ #1)}
\renewcommand{\mod}[1]{\ \mathrm{mod}\ #1}
\numberwithin{equation}{section}
\numberwithin{equation}{section}
\subjclass[2020]{Primary 11N37; Secondary 11N75}
\keywords{additive divisor problems, shifted convolution sums, divisor functions, generalised divisor functions}
\let\underbrace\LaTeXunderbrace
\begin{document}
\title{Smoothed Shifted Convolutions of Generalised Divisor Functions}
\author{Cheuk Fung (Joshua) Lau}
\address{Mathematical Institute, University of Oxford, Radcliffe Observatory Quarter, Woodstock Rd, Oxford OX2 6GG, UK}
\email{joshua.cf.lau@gmail.com}
\begin{abstract}
We prove an asymptotic formula for the smoothed shifted convolution of the generalised divisor function $d_k(n)$ and the divisor function $d(n)$ for $k \ge 4$, with a power-saving error term whose exponent is independent of $k$. In particular, for sufficiently large $k$, this improves on the result of Topacogullari (2018).
\end{abstract}
\maketitle
\tableofcontents
\section{Introduction}
Prime numbers are of great interest in number theory, and we may use the von Mangoldt function $\Lambda$ to study patterns of prime numbers. For example, to study the number of twin primes less than or equal to $x$, it suffices to estimate $\sum_{n \leq x} \Lambda(n)\Lambda(n+2)$. One idea for estimating this quantity is to use the following decomposition of Linnik \cite{linnik1963dispersion} 
    \[
    \Lambda(n)=\log n \sum_{k=1}^\infty \frac{(-1)^{k-1}}{k} \widetilde{d}_k(n),
    \]
where $\widetilde{d}_k(n)=\#\{n=n_1 \cdots n_k:n_1,\ldots,n_k>1\}$. Therefore, if one could handle all sums of the form $\sum_{n \leq x} d_k(n) d_j(n+h)$, then we could estimate the number of twin primes. Heuristically, since
we expect $\sum_{n \leq x} d_k(n)$ to grow like $x(\log x)^{k-1}$, one might expect $\sum_{n \leq x} d_k(n) d_j(n+h)$ to behave like $x(\log x)^{k+j-2}$ when $h \neq 0$. The case $j=k=2$ is known as the binary additive divisor problem, which has been studied by many authors. It is known that for $h \in \mathbb{Z}$ non-zero and $|h| \ll x^{\frac23}$,
\[
\sum_{n \leq x} d(n)d(n+h)=xP_{2,h}(\log x)+O(x^{\frac23+\e}),
\]
where $P_{2,h}(t)$ is a quadratic polynomial depending on $h$. This can be found in Motohashi \cite{motohashi1994binary}, where a more detailed review of this problem can be found as well. For $k=3$, Topacogullari \cite{topacogullari2016shifted} proved for $h \in \mathbb{Z}$ non-zero and $|h| \ll x^{\frac23}$,
\[
\sum_{n \leq x} d_3(n)d(n+h)=xP_{3,h}(\log x)+O(x^{\frac89+\e}),
\]
where $P_{3,h}(t)$ is a cubic polynomial depending on $h$. In the general case $k \geq 4$, the first result with a main term of this kind was obtained by Motohashi \cite{motohashi1980asymptotic} using the dispersion method, namely
\[
\sum_{n \leq x} d_k(n)d(n+1)=xP_{k}(\log x)+O_{k}(x(\log \log x)^{c_k} (\log x)^{-1}),
\]
where $c_k$ is a constant depending only on $k$ and $P_{k}(t)$ is a polynomial of degree $k$. Using spectral methods, Drappeau  \cite{drappeau2017sums} obtained a power-saving error term and proved that there exists $\delta>0$ such that for $h \in \mathbb{N}$ and $h \ll x^\delta$, we have
\[
\sum_{n \leq x} d_k(n)d(n+h)=xP_{k,h}(\log x)+O_{k}(x^{1-\frac{\delta}{k}}),
\]
where $P_{k,h}(t)$ is a polynomial of degree $k$ depending on $h$. This result was improved by Topacogullari \cite{topacogullari2018shifted}, who proved that for $h$ non-zero and $|h| \ll x^{\frac{15}{19}}$,
\[
\sum_{n \leq x} d_k(n)d(n+h)=xP_{k,h}(\log x)+O_{k,\e}(x^{1-\frac{4}{15k-9}+\e}+x^{\frac{56}{57}+\e}),
\]
where $P_{k,h}(t)$ is a polynomial of degree $k$ depending on $h$. If we consider a smoothed version of this problem, Topacogullari \cite{topacogullari2018shifted} proved that for $w:[1/2,1] \to [0,\infty)$ a smooth and compactly supported function, if $h \in \mathbb{Z}$ is non-zero and $|h| \ll x^{\frac{15}{19}}$, we have
\begin{equation*}
    \sum_{n} w \left( \frac{n}{x} \right) d_k(n)d(n+h)=xP_{k,h,w}(\log x)+O_{k,w,\e}(x^{1-\frac{1}{3k-2}+\e}+x^{\frac{37}{38}+\frac{\theta}{19}+\e}),
\end{equation*}
where $P_{k,h,w}(t)$ is a polynomial of degree $k$ depending on $k$, $h$ and $w$, and $\theta$ is the best bound towards the Ramanujan-Selberg conjecture, i.e. $\theta=7/64$ from Kim and Sarnak \cite{kim2003functoriality}. The above power saving depends on $k$, and the exponent worsens as $k$ increases. In this paper, we obtain an asymptotic where the exponent of the power saving is uniform in $k$.
\begin{thm} \label{thm:intromainthm}
    Let $w:[1/2,1] \to [0,\infty)$ be smooth and compactly supported. Let $k \ge 4$ and let $h$ be a non-zero integer such that $|h| \ll x^{\frac{25}{28}-\eta}$ for some $0<\eta<25/28$. Then,
    \[
    \sum_n w\left( \frac{n}{x} \right) d_k(n) d(n + h)=xP_{k,h,w}(\log x)+O_{k,w,\e}(x^{1-\frac{7}{128} \eta+\e}+x^{\frac{127}{128}+\e}),
    \]
    where $P_{k,h,w}(t)$ is a polynomial of degree $k$ depending on $k$, $h$ and $w$.
\end{thm}
\noindent
For sufficiently large $k$, this improves on Topacogullari \cite{topacogullari2018shifted}. Theorem \ref{thm:intromainthm} is a corollary of the following result.
\begin{thm}
    Let $w:[1/2,1] \to [0,\infty)$ be smooth and compactly supported, $k \ge 4$, $h \in \mathbb{Z}$, $\e>0$, and $x \in \mathbb{R}^+$ sufficiently large in terms of $\e$. Then for $0 \leq \delta \leq \frac{1}{16}$ and $|h| \ll x^{1-\e}$, we have
    \begin{align*}
        \sum_n w \left( \frac{n}{x} \right) &d_k(n) d(n + h)= \ \int_\mathbb{R} w \left( \frac{\xi}{x} \right) P_{k,h}(\log x,\log \xi,\log(\xi+h)) \d \xi\\
        &+O_{k,w,\e,\delta}\left(x^{1-\delta+2 \delta \theta+\e}\left( 1+\frac{|h|^{\frac{1}{4}}}{x^{\frac14-\frac12 \delta}} \right)+x^{1-\delta+\frac{\theta}{2}+\e} \left(1+\frac{|h|^{\frac{\theta}{2}}}{x^{\frac{\theta}{3}+\frac{2 \delta}{3} \theta}}\right) \right),
    \end{align*}
    where $P_{k,h}$ is a polynomial of degree $k$ depending only on $k$ and $h$, and the implied constant depends on $k,w,\e,\delta$.
\end{thm}
\noindent
We remark that a fixed power saving for the sharp cutoff problem is currently out of reach, since it remains unknown whether an asymptotic formula with a fixed power saving uniform in large $k$ holds for $\sum_{n \le x} d_k(n)$.
\section{Outline}
In this section, we outline the main ideas and set $h=1$ for simplicity. The first few steps are similar to the treatment in Topacogullari \cite{topacogullari2018shifted}. We first write
\[
\sum_n w \left( \frac{n}{x} \right) d_k(n)d(n+1)=\sum_{a_1,\ldots,a_k} w \left( \frac{a_1 \cdots a_k}{x} \right) d(a_1 \cdots a_k+1),
\]
and so it suffices to estimate
\begin{equation} \label{eqn:outlinedyadic}
    \sum_{\substack{a_1,\ldots,a_k\\a_i \asymp A_i}} w \left( \frac{a_1 \cdots a_k}{x} \right) d(a_1 \cdots a_k+1),
\end{equation}
where $a_1,\ldots,a_k$ are supported dyadically on $a_i \asymp A_i$. If there is one large variable, or a product of two variables is large, then (\ref{eqn:outlinedyadic}) can be estimated using the Voronoi formula by splitting the summation into $\mod \prod_i a_i$. Otherwise, some product of the variables $a_i$ must be of 'medium' size, say $a_1 \cdots a_r$. In this case, let $$\Phi_r(b) := \sum_{a_i \asymp A_i \forall i \leq r} d(a_1 \cdots a_rb+1),$$ and write $\widetilde{\Phi}_r(b)$ as the expected main term (using the Voronoi formula). Therefore, (\ref{eqn:outlinedyadic}) becomes
\begin{equation*}
    \sum_{a_i \asymp A_i \forall i >r} \widetilde{\Phi}_r(a_{r+1} \cdots a_k)-\sum_{a_i \asymp A_i \forall i>r} (\widetilde{\Phi}(a_{r+1} \cdots a_k)-\Phi(a_{r+1} \cdots a_k)).
\end{equation*}
The first term is the main term and can be computed easily, and to upper bound the second term we use Cauchy-Schwarz and it suffices to bound
\[
\sum_{b \asymp B} (\widetilde{\Phi}_r(b)-\Phi_r(b))^2=\sum_{b \asymp B} \widetilde{\Phi}_r(b)^2-2\sum_{b \asymp B} \widetilde{\Phi}_r(b)\Phi_r(b)+\sum_{b \asymp B} \Phi_r(b)^2,
\]
where $B=A_{r+1} \cdots A_k$. It is straightforward to estimate the first two sums, while if we open the square in the last sum it suffices to estimate sums of the form
\begin{equation} \label{eqn:outlinecertaindivisor}
    \sum_n w_1 \left( \frac{r_1n}{x} \right) w_2 \left( \frac{r_2n}{x} \right)d(r_1n+1)d(r_2n+1),
\end{equation}
where $r_1,r_2$ are suitably sized with $r_1 \neq r_2$. In fact, before applying Cauchy-Schwarz we can group square factors together, so it suffices to estimate (\ref{eqn:outlinecertaindivisor}) for squarefree $r_1,r_2$.
\\

To do this, we use Theorem \ref{thm:grimmelt10.1}, which is Theorem 10.1 of Grimmelt and Merikoski \cite{grimmelt2024twisted}. This result was proven by spectral methods, and it counts the solutions to determinant equations $ad-bc=h$ twisted by periodic weights. The resulting error term consists of data concerning the ranges of the variables $a,c,d$, as well as a quantity $\mathcal{K}$ that depends on the periodic weight and its underlying geometry.\\

To outline our strategy, we focus on the particular case $r_1,r_2$ coprime, and assume the Ramanujan-Petersson conjecture. We can rewrite (\ref{eqn:outlinecertaindivisor}) as
\begin{equation} \label{eqn:outlinetransformedcertaindivisor}
    \sum_n \sum_{\substack{ad=r_1n+1\\bc=r_2n+1}} w_1 \left( \frac{ad-1}{x} \right) w_2 \left( \frac{bc-1}{x} \right).
\end{equation}
Rewriting the constraints as determinant equations, note $ad=r_1n+1$ and $bc=r_2n+1$ together imply $r_2ad-r_1bc=r_2-r_1$. Also, since $(r_1,r_2)=1$, $r_2ad-r_1bc=r_2-r_1$ implies both $ad=r_1n+1$ and $bc=r_2n+1$. Therefore (\ref{eqn:outlinetransformedcertaindivisor}) becomes
\begin{align*}
    \sum_{r_2ad-r_1bc=r_2-r_1} w_1 \left( \frac{ad-1}{x} \right) w_2 \left( \frac{bc-1}{x} \right)
    = &\sum_{ad-bc=r_2-r_1} w_1 \left( \frac{ad-r_2}{r_2x} \right) w_2 \left( \frac{bc-r_1}{r_1x} \right) \mathds{1}_{r_2 \mid a} \mathds{1}_{r_1 \mid c}
\end{align*}
and let $\alpha(\begin{psmallmatrix}
    a & b\\c &d
\end{psmallmatrix})=\mathds{1}_{r_2 \mid a} \mathds{1}_{r_1 \mid c}$. Then, we dyadically split the variables into $a \asymp A$, $b \asymp B$, $c \asymp C$, $d \asymp D$ and without loss of generality assume $r_1^{-1}C \ll D \ll r_2^{-1}A \ll B$. To treat the very skewed ranges $A>r_2C$ or $D>C$, we apply Poisson summation.\\

In other ranges, we may apply Theorem \ref{thm:grimmelt10.1} with $\Gamma=\Gamma_2(r_2,r_1)$. Then, it is straightforward to compute the main term, so we focus on the error term, in particular $\mathcal{K}$. To bound
\[
\sum_{0 \leq |c| \leq \frac{6C}{D}} \left| \sum_{\tau \in \Gamma \backslash \operatorname{SL}_2(\mathbb{Z})} \alpha(\tau) \overline{\alpha(\tau \begin{psmallmatrix}
                1 & 0 \\ c & 1
            \end{psmallmatrix})} \right|,
\]
we use a description of $\Gamma_2(r_2,r_1) \backslash \operatorname{SL}_2(\mathbb{Z})$ with projective lines to get
\[
\sum_{0 \leq |c| \leq \frac{6C}{D}} \left| \sum_{\tau \in \Gamma \backslash \operatorname{SL}_2(\mathbb{Z})} \alpha(\tau) \overline{\alpha(\tau \begin{psmallmatrix}
                 1 & 0 \\ c & 1
            \end{psmallmatrix})} \right| \ll \frac{C}{r_1r_2D}+1 \ll 1,
\]
and similarly for the corresponding sum over $b$. Combining these bounds, we get $\mathcal{K} \ll 1$. Putting these together, one gets an asymptotic for (\ref{eqn:outlinecertaindivisor}) with error term $\ll \sqrt{r_2x}$. Combining the above and assuming the Ramanujan-Petersson conjecture, a sketch of what we get is
\[
\sum_{\substack{a_1,\ldots,a_k\\ a_i \asymp A_i}} w \left( \frac{a_1 \cdots a_k}{x} \right) d(a_1 \cdots a_k+1)=\mathrm{MT}+R,
\]
where MT is the main term which may be handled straightforwardly, and $R$ is the error term with the upper bounds
\begin{align}
    R &\ll \frac{x^{\frac32+\e}}{A_1^{\frac32}}, \label{eqn:outlineremainderbound1}\\
    R &\ll \frac{x^{\frac32+\e}}{A_1A_2} \left( 1+\frac{A_2^{\frac12}}{A_1^{\frac12}} \right) \left( 1+ \frac{A_1^\frac12 A_2^{\frac12}}{x^{\frac12}} \right), \label{eqn:outlineremainderbound2}\\
    R &\ll \frac{x^{1+\e}}{A^{\frac12}}+A^{\frac34}x^{\frac34+\e}, \label{eqn:outlineremainderbound3}
\end{align}
where $A$ is an arbitrary product of factors $A_1,\ldots,A_k$. As mentioned above, we use the three bounds depending on the sizes of the factors $A_1,\ldots,A_k$. Without loss of generality, assume $A_1 \geq \cdots \geq A_k$, and for $\delta>0$ we let the boundary values to be
\[
X_1=x^{\frac13+\frac23\delta}, \quad X_2=x^{\frac12+\delta}, \quad X_3=x^{\frac13-\frac43\delta}, \quad X_4=x^{2 \delta}.
\]
If $A_1 \gg X_1$, we use (\ref{eqn:outlineremainderbound1}) to get $R \ll x^{1-\delta+\e}$. If $A_1A_2 \gg X_2$, then for $\delta\le 1/2$ we use (\ref{eqn:outlineremainderbound2}) to get
\[
R \ll x^{1-\delta+\e} \left( 1+{x^{-\frac14+\frac12\delta}} \right) \ll x^{1-\delta+\e}.
\]
At last, it can be shown that the only remaining case is $X_4 \ll \prod_{i \in I}A_i \ll X_3$ for some non-empty index set $I \subseteq \{1,2,\ldots,k\}$. Letting $A=\prod_{i \in I}A_i$, using (\ref{eqn:outlineremainderbound3}) we get
\[
R \ll \frac{x^{1+\e}}{X_4^{\frac12}}+X_3^{\frac34}x^{\frac34+\e} \ll x^{1-\delta+\e}.
\]
Compared to Topacogullari \cite{topacogullari2018shifted} our approach differs in two aspects: First, the above application of Theorem \ref{thm:grimmelt10.1} replaces his more classical use of sums of Kloosterman sums. Second, we used a more efficient gluing of variables with Cauchy-Schwarz, as described. The second change alone would allow us to obtain a variant of Theorem \ref{thm:intromainthm} with fixed, albeit worse, power saving.
\section{Acknowledgments}
We would like to thank Jori Merikoski and Lasse Grimmelt for suggesting this question, and for numerous helpful comments throughout the writing of this paper.
\section{Notation}
Throughout this paper, we 
say $f \ll g$ and $f=O(g)$ when there exists a constant $C>0$ such that $|f(x)| \leq Cg(x)$ for $x$ sufficiently large. If this depends on parameter $\e$ say, then we write $f \ll_\e g $ or $f=O_\e(g)$. We use $f=o(g)$ to mean $\lim_{x \to \infty} f(x)/g(x)=0$.\\
\centerline{}
\noindent
Given integers $d_1,d_2$ we use $\gcd(d_1,d_2)$ or $(d_1,d_2)$ to denote the greatest common divisor of $d_1$ and $d_2$, and $\operatorname{lcm}(d_1,d_2)$ or $[d_1,d_2]$ to denote the least common multiple of $d_1$ and $d_2$. We define $(d_1,d_2^\infty) := \prod_{p \mid d_2} p^{\nu_p(d_1)}$.\\
\centerline{}
\noindent
We use $\mathrm{M}_2(\mathbb{Z})$ to denote the set of 2 by 2 matrices with entries in $\mathbb{Z}$, and $\mathrm{M}_{2,h}(\mathbb{Z})$ denotes the subset of $\mathrm{M}_2(\mathbb{Z})$ with determinant $h$. Given $q_1,q_2 \in \mathbb{N}$, we define groups
\begin{align*}
    \Gamma_0(q_1) &:= \left\{\begin{pmatrix}
    a & b\\ c & d
\end{pmatrix} \in \operatorname{SL}_2(\mathbb{Z}):q_1 \mid c\right\},\\
\Gamma_2(q_1,q_2) &:= \left\{ \begin{pmatrix}
            a & b\\ c & d
        \end{pmatrix} \in \operatorname{SL}_2(\mathbb{Z}):q_1 \mid b, \ q_2 \mid c \right\}.
\end{align*}
\centerline{}
\noindent
Given a subgroup $\Gamma \leq \operatorname{SL}_2(\mathbb{Z})$, we say $\alpha:\mathrm{M}_2(\mathbb{Z}) \to \mathbb{C}$ is $\Gamma$-automorphic if $\alpha(\gamma g)=\alpha(g)$ for all $\gamma \in \Gamma$, $g \in \mathrm{M}_2(\mathbb{Z})$. We write $\mathcal{A}(q_1,q_2)$ to be the set of $\Gamma_2(q_1,q_2)$-automorphic functions. We define $\theta$ to be the best bound towards the Ramanujan-Selberg conjecture.\\
\centerline{}
\noindent
For $n,J \in \mathbb{N}$, $\delta>0$ and $X_1,\ldots,X_n$ positive reals, we define the space
\begin{align*}
    C_\delta^J(X_1,\ldots,X_n)=\{&f \in C^J(\mathbb{R}^n):\ \operatorname{supp} f \subseteq [X_1,2X_1] \times \cdots \times [X_n,2X_n],\\&
    \lVert \partial_{x_1}^{J_1} \cdots \partial_{x_n}^{J_n}f \rVert_\infty \leq \prod_{i \leq n} (\delta X_i)^{-J_i} \enspace \forall \, 0 \leq J_1+\cdots+J_n \leq J\}.
\end{align*}
%
\centerline{}
\noindent
Given $p$ prime and $k \in \mathbb{Z}_{>0}$, we define the projective line over  
$\mathbb{Z}/p^k \mathbb{Z}$ by
$$\mathbb{P}_{p^k}^1 := \{(x,y) \in (\mathbb{Z}/p^k \mathbb{Z})^2:x\text{ or }y \in (\mathbb{Z}/p^k \mathbb{Z})^\times\}/\sim,$$
where we define the equivalence relation by $(x_1,y_1)\sim (x_2,y_2)$ if there is $\lambda \in (\mathbb{Z}/p^k \mathbb{Z})^\times$ such that $(x_2,y_2)=(\lambda x_1,\lambda y_1)$. For $q \in \mathbb{Z}_{>0}$, we define
\[
\mathbb{P}_q^1:= \prod_{p^k \parallel q} \mathbb{P}_{p^k}^1,
\]
and by the Chinese Remainder Theorem we can identify $\mathbb{P}_q^1$ with
$
\{(x,y) \in (\mathbb{Z}/q\mathbb{Z})^2:\gcd(x,y,q)=1\}/\sim,
$
where $\sim$ is the equivalence relation defined above with $\lambda \in (\mathbb{Z}/q\mathbb{Z})^\times$.

\section{A Certain Divisor Problem}
\noindent
We first prove the following result on a certain divisor sum.
\begin{thm} \label{thm:certaindivisorcoprime}
    Let $r_1,r_2 \in \mathbb{Z}^+$ be squarefree and distinct, and $h \in \mathbb{Z} \setminus \{0\}$ be such that $(h,r_1r_2)=1$. Let $r_0=(r_1,r_2)$, $\widetilde{r}_i=r_i/r_0$ for $i=1,2$, $x,\eta,\e \in \mathbb{R}^+$, and  define $w_1,w_2:\mathbb{R} \to \mathbb{R}$ smooth functions compactly supported on $[1/2,1]$, satisfying $w_1^{(j)}(x),w_2^{(j)}(x) \ll_j x^{j \eta}$ for all $j \geq 0$. Then for $|h| \ll x^{1-\e}$, we have
    \begin{align*}
        \sum_n w_1 \left( \frac{r_1n}{x} \right) &w_2\left( \frac{r_2n}{x} \right) d(r_1n+h) d(r_2n+h)=\textrm{Main Term}\\
        &+O_{\e}\left( x^{\frac12+O(\eta)} r_0^\e r_2^{\frac12}\gcd(\widetilde{r}_2-\widetilde{r}_1,r_0^\infty)^{1+\e} \left( \left(\frac{|h(r_2-r_1)|}{r_0} \right)^\theta+\frac{x^\theta}{r_0^\theta}  \right) \right),
    \end{align*}
    where the main term is given by
    \[
    \int_\mathbb{R} w_1 \left( \frac{r_1 \xi}{x} \right) w_2 \left( \frac{r_2 \xi}{x} \right) P(\log(r_1 \xi+h),\log(r_2 \xi+h)) \d \xi,
    \]
    and $P(X,Y)$ is a quadratic polynomial depending only on $r_1,r_2,h$.
\end{thm}
\noindent The main idea is to use the following special case of Grimmelt and Merikoski \cite[Theorem 10.1]{grimmelt2024twisted}.
\begin{thm} \label{thm:grimmelt10.1}
    Let $q_1,q_2 \in \mathbb{Z}_{>0}$. For non-zero integers $h,k$ denote
    \[
    \mathrm{M}_{2,h,k}(\mathbb{Z}) := \left\{ \begin{pmatrix}
        a & b\\ c & d
    \end{pmatrix} \in \mathrm{M}_{2,hk}(\mathbb{Z}):\gcd(a,c,k)=\gcd(b,d,k)=1 \right\}.
    \]
    Denote
    $$q=q_1q_2, \quad \Gamma=\Gamma_2(q_1,q_2), \quad T=\Gamma \setminus \operatorname{SL}_2(\mathbb{Z}), \quad T_{1,k} := \operatorname{SL}_2(\mathbb{Z}) \backslash \mathrm{M}_{2,1,k}(\mathbb{Z})$$
    and let $\alpha \in \mathcal{A}(q_1,q_2)$. Let $A,C,D,\delta,\eta>0$ with $AD >\delta$ and denote $Z=\max\{A^{\pm 1},C^{ \pm 1},D^{\pm 1},\delta^{-1}\}$. Assume $|hk| \leq (AD)^{1+\eta}$ and $\gcd(h,kq_1q_2)=1$. Let
    $$f \in C_\delta^7 \left( \frac{A}{\sqrt{|hk|}},\frac{C}{\sqrt{|hk|}},\frac{D}{\sqrt{|hk|}} \right).$$
    Assume that for some $\mathcal{K_+}>0$ we have
    $$\frac1k \sum_{\substack{g=\begin{psmallmatrix}
        a&b\\c&d
    \end{psmallmatrix} \in \operatorname{SL}_2(\mathbb{R})\\
    |a|+|b|C/D+|c|D/C+|d| \leq 10}}\left| \sum_{\substack{\sigma_1,\sigma_2 \in T_{1,k}\\ \sigma_2 g \sigma_1^{-1} =: \sigma \in \operatorname{SL}_2(\mathbb{Z})}}\sum_{\substack{\tau \in T}} \alpha(\tau \sigma \sigma_1) \overline{\alpha(\tau \sigma_2)} \right| \ll Z^{O(\eta)} \mathcal{K}_+.$$
    Then
    \begin{align*}
        &\sum_{g=\begin{psmallmatrix}
        a&b\\c&d
    \end{psmallmatrix} \in \operatorname{M}_{2,h,k}(\mathbb{Z})} \alpha(g) f\left( \frac{a}{\sqrt{|hk|}},\frac{c}{\sqrt{|hk|}},\frac{d}{\sqrt{|hk|}} \right) \\
    =&\frac{1}{\zeta(2) q \prod_{p \mid q} (1+p^{-1})} \sigma_1(|h|) \sum_{\tau \in \Gamma \backslash M_{2,1,k}(\mathbb{Z})} \alpha(\tau) \int_{\mathbb{R}^3} f(a,c,d) \frac{\d a \d c \d d}{c}\\
    &+O \left( Z^{O(\eta)} \delta^{-O(1)} (AD)^{1/2} \mathcal{K}_+^{1/2} \left( \mathcal{R}_0+\min_{j \in \{1,2\}} \mathcal{R}_j \right) \right),
    \end{align*}
    where
    \begin{align*}
        \mathcal{R}_0 &= \frac{ A^{1/2}}{q_1^{1/2} C^{1/2}},\\
        \mathcal{R}_1 &= |h|^{\theta} \left( 1+ \left( \frac{CD}{|hk|q_2} \right)^\theta \right) \left( 1+\left(\frac{C}{Aq_2} \right)^{\frac{1}{2}-\theta} \right),\\
        \mathcal{R}_2 &= \left( 1+ \left( \frac{CD}{|k|q_2} \right)^\theta \right) \left( 1+\left(\frac{|h|C}{Aq_2} \right)^{\frac{1}{2}-\theta} \right).
    \end{align*}
\end{thm}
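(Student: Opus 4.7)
The strategy is to recast the left-hand side as an average of an automorphic weight over matrices of fixed determinant, and then apply Theorem \ref{thm:grimmelt10.1}. Writing $r_i = r_0 \widetilde{r}_i$ with $(\widetilde{r}_1,\widetilde{r}_2)=1$, and opening $d(r_i n + h) = \sum_{ad=r_i n + h} 1$, the joint system $ad = r_1 n + h$, $bc = r_2 n + h$ is equivalent, after eliminating $n$, to the single equation $\widetilde{r}_2 ad - \widetilde{r}_1 bc = (\widetilde{r}_2 - \widetilde{r}_1) h$ together with the auxiliary congruence $ad \equiv h \pmod{r_0}$. Substituting $A = \widetilde{r}_2 a$ and $C = \widetilde{r}_1 c$ rewrites the sum as
\[
\sum_{Ad - bC = (\widetilde{r}_2 - \widetilde{r}_1) h} \alpha\!\begin{pmatrix} A & b \\ C & d \end{pmatrix} w_1\!\left(\frac{Ad - \widetilde{r}_2 h}{\widetilde{r}_2 x}\right) w_2\!\left(\frac{bC - \widetilde{r}_1 h}{\widetilde{r}_1 x}\right),
\]
where $\alpha$ is the indicator of the three conditions $\widetilde{r}_2\mid A$, $\widetilde{r}_1\mid C$, and $Ad \equiv \widetilde{r}_2 h \pmod{r_0}$; a direct check shows $\alpha$ is $\Gamma_2(r_2,r_1)$-automorphic.

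Next I would dyadically decompose $A \asymp A_0$, $C \asymp C_0$, $D \asymp D_0$, noting that the weights force $Ad \asymp \widetilde{r}_2 x$ and $bC \asymp \widetilde{r}_1 x$. In the skewed regimes where either $A_0$ dominates $\widetilde{r}_2 C_0$ or $D_0$ dominates $C_0$, I would solve the determinant equation for the long variable and apply Poisson summation, obtaining fast decay from the smoothness of $w_1 w_2$. In the remaining balanced ranges I apply Theorem \ref{thm:grimmelt10.1} with $q_1 = r_2$, $q_2 = r_1$, effective determinant $(\widetilde{r}_2 - \widetilde{r}_1)h$, and the parameter $k$ chosen to absorb any common factor between the determinant and $q_1 q_2$. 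The main term produced by the theorem is a smooth integral against the averaged indicator $\sum_{\tau \in \Gamma \backslash \mathrm{M}_{2,1,k}(\mathbb{Z})} \alpha(\tau)$; a short local computation together with a Taylor expansion of the weights against the implicit logarithmic factors from the divisor functions reassembles this into the advertised integral of $w_1 w_2$ against a quadratic polynomial in $\log(r_1\xi + h)$ and $\log(r_2 \xi + h)$.

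The principal obstacle is the estimate of $\mathcal{K}_+$ for $\alpha$. Following the projective-line description of $\Gamma_2(r_2, r_1) \backslash \operatorname{SL}_2(\mathbb{Z})$ used in the outline, one has to bound
\[
\sum_{|c| \leq 6 C_0 / D_0} \left| \sum_{\tau \in \Gamma \backslash \operatorname{SL}_2(\mathbb{Z})} \alpha(\tau)\, \overline{\alpha\!\left(\tau \begin{psmallmatrix} 1 & 0 \\ c & 1 \end{psmallmatrix}\right)} \right|
\]
and its upper-triangular analogue. When $r_0 = 1$, the outline already yields $\mathcal{K}_+ \ll 1$. For general $r_0$, the diagonal congruence $Ad \equiv \widetilde{r}_2 h \pmod{r_0}$ is preserved under the right action by the lower-triangular generator precisely when $r_0 \mid bcd$; a prime-by-prime local analysis shows that at each $p \mid r_0$ the number of admissible residue classes is enlarged exactly when $p \mid \widetilde{r}_2 - \widetilde{r}_1$. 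Tracking this local multiplicity produces a bound for $\mathcal{K}_+$ whose square root contributes the factors $r_0$ and $\gcd(\widetilde{r}_2 - \widetilde{r}_1, r_0^\infty)^{1/2}$ in the stated error. Inserting the bound into $(AD)^{1/2} \mathcal{K}_+^{1/2} (\mathcal{R}_0 + \min(\mathcal{R}_1, \mathcal{R}_2))$ from Theorem \ref{thm:grimmelt10.1}, optimising the dyadic balance, and combining with the Poisson contribution from the skewed regimes produces the claimed error bound, with the two $\theta$-dependent terms corresponding to the two choices of $\mathcal{R}_j$.
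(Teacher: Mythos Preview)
Your proposal does not address the stated theorem at all. Theorem~\ref{thm:grimmelt10.1} is the abstract spectral input: it asserts an asymptotic for $\sum_{g\in \mathrm{M}_{2,h,k}(\mathbb{Z})} \alpha(g)\, f(a/\sqrt{|hk|},c/\sqrt{|hk|},d/\sqrt{|hk|})$ for a general $\Gamma_2(q_1,q_2)$-automorphic weight $\alpha$ and a general test function $f$, with an error governed by the quantity $\mathcal{K}_+$. In the paper this result is not proved; it is quoted verbatim as a special case of Theorem~10.1 of Grimmelt--Merikoski, and its proof lives entirely in that reference (via the spectral theory of automorphic forms and Kuznetsov-type estimates). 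There is nothing in your write-up that engages with that machinery.

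What you have written is instead a sketch of the proof of Theorem~\ref{thm:certaindivisorcoprime} (the ``certain divisor problem'' for $\sum_n w_1(r_1n/x)w_2(r_2n/x)d(r_1n+h)d(r_2n+h)$), which \emph{applies} Theorem~\ref{thm:grimmelt10.1} as a black box. Your very first sentence says the plan is to ``apply Theorem~\ref{thm:grimmelt10.1}'', which is circular relative to the statement you were asked to prove. The manipulations you describe---opening the divisor functions, eliminating $n$ to reach the determinant equation $\widetilde r_2 ad-\widetilde r_1 bc=(\widetilde r_2-\widetilde r_1)h$, checking $\Gamma_2(r_2,r_1)$-automorphy of the indicator, dyadic localisation, Poisson in the skewed ranges, and the projective-line bound for $\mathcal{K}_+$---are all steps from the paper's proof of Theorem~\ref{thm:certaindivisorcoprime} (together with Lemmas~\ref{lem:boundingKeasy} and~\ref{lem:boundingK}), not of Theorem~\ref{thm:grimmelt10.1}. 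If the intended target was in fact Theorem~\ref{thm:certaindivisorcoprime}, then your outline is broadly on track and matches the paper's approach; but as a proof of Theorem~\ref{thm:grimmelt10.1} it is vacuous.
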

The following description of quotient by subgroups of $\operatorname{SL}_2(\mathbb{Z})$ will be useful later.
\begin{lem} \label{lem:quotientdescription}
    For positive integers $q_1,q_2$ and $q_0=\gcd(q_1,q_2)$, the maps
    \begin{align*}
        \varpi_{q_1,q_2}:\Gamma_2(q_1,q_2) \backslash \operatorname{SL}_2(\mathbb{Z}) &\to \{((a,b),(c,d)) \in \mathbb{P}_{q_1}^1 \times \mathbb{P}_{q_2}^1:(ad-bc,q_0)=1\}\\
        \left[ \begin{pmatrix}
            a & b\\ c & d
        \end{pmatrix} \right] &\mapsto ([(a,b)],[(c,d)])
    \end{align*}
    and
    \begin{align*}
        \varpi_{q_1}:\Gamma_0(q_1) \backslash \operatorname{SL}_2(\mathbb{Z}) \to \mathbb{P}_{q_1}^1, \quad \left[ \begin{pmatrix}
            a & b\\ c & d
        \end{pmatrix} \right] \mapsto [(c,d)]\\
        \varpi_{q_2}':\Gamma_0(q_2)^T \backslash \operatorname{SL}_2(\mathbb{Z}) \to \mathbb{P}_{q_2}^1, \quad \left[ \begin{pmatrix}
            a & b\\ c & d
        \end{pmatrix} \right] \mapsto [(a,b)]
    \end{align*}
    are bijections.
\end{lem}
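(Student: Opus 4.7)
My plan is to verify in turn that each of the three maps is well-defined, injective, and surjective. The single-row maps $\varpi_{q_1}$ and $\varpi_{q_2}'$ appear as the two ``halves'' of $\varpi_{q_1,q_2}$, so I would treat them in parallel and concentrate effort on the joint map.

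For well-definedness, I would write $\gamma = \begin{psmallmatrix}\alpha&\beta\\\gamma'&\delta\end{psmallmatrix} \in \Gamma_2(q_1,q_2)$ with $q_1 \mid \beta$ and $q_2 \mid \gamma'$; the determinant relation $\alpha\delta - \beta\gamma' = 1$ then forces $\alpha \in (\mathbb{Z}/q_1\mathbb{Z})^\times$ and $\delta \in (\mathbb{Z}/q_2\mathbb{Z})^\times$. A direct computation shows that the top row of $\gamma g$ reduces to $\alpha(a,b) \pmod{q_1}$ and the bottom row to $\delta(c,d) \pmod{q_2}$, independent of coset representative, while the image condition $(ad-bc,q_0)=1$ is automatic since $\det g = 1$. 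For injectivity, I would compute $g(g')^{-1}$ entry-by-entry: its $(1,2)$ entry is $ba' - ab'$ and its $(2,1)$ entry is $cd' - dc'$. The hypothesis $(a,b) \equiv \lambda_1 (a',b') \pmod{q_1}$ gives $ba' - ab' \equiv 0 \pmod{q_1}$, and similarly $cd' - dc' \equiv 0 \pmod{q_2}$, placing $g(g')^{-1}$ in $\Gamma_2(q_1,q_2)$. The single-row maps follow from the respective halves of this argument.

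The main obstacle is surjectivity of $\varpi_{q_1,q_2}$, where the hypothesis $(ad-bc,q_0)=1$ becomes essential. For the single-row maps, surjectivity is direct: given a class in $\mathbb{P}_{q_1}^1$, I would choose a representative $(c_0,d_0)$ with $\gcd(c_0,d_0,q_1)=1$, use CRT to shift $d_0$ by a multiple of $q_1$ so that $\gcd(c_0,d_0)=1$, and invoke Bezout to complete the matrix. For $\varpi_{q_1,q_2}$, my plan is strong approximation. Setting $N = \operatorname{lcm}(q_1,q_2)$, the reduction $\operatorname{SL}_2(\mathbb{Z}) \to \operatorname{SL}_2(\mathbb{Z}/N\mathbb{Z})$ is surjective, so it suffices to construct a matrix in $\operatorname{SL}_2(\mathbb{Z}/N\mathbb{Z})$ with correct local reductions; by CRT this further reduces to a prime-by-prime problem. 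At a prime $p$ with $e_i = v_p(q_i)$, if $e_1 = 0$ or $e_2 = 0$ only one row is constrained and the construction is routine. For $e_1, e_2 \geq 1$, assume WLOG $e_1 \leq e_2$, so that $v_p(q_0) = e_1$. Fixing the bottom row $(c,d) \pmod{p^{e_2}}$, the space of top rows with determinant $1$ forms a one-parameter family $(a_0^* + tc,\, b_0^* + td)$, and requiring proportionality to $(a,b)$ modulo $p^{e_1}$ reduces to a single linear congruence in $t$ with coefficient $ad - bc$. The hypothesis $(ad-bc,q_0)=1$ makes this coefficient a unit modulo $p^{e_1}$ (since $p^{e_1} \mid q_0$), guaranteeing a solution. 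Assembling the local data by CRT and lifting to $\operatorname{SL}_2(\mathbb{Z})$ via strong approximation completes the proof.
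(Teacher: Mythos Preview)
Your proof is correct. The paper itself gives no argument here, deferring instead to the discussion before Lemma~10.2 of \citet{grimmelt2024twisted}; your approach---well-definedness and injectivity by direct row computations, then surjectivity via strong approximation for $\operatorname{SL}_2$ over $\mathbb{Z}/N\mathbb{Z}$ combined with a prime-by-prime local construction---is the standard one and almost certainly coincides with what that reference does. One point worth making explicit in your write-up: the reduction of ``top row proportional to $(a,b)$ modulo $p^{e_1}$'' to a single linear congruence in $t$ works because, for points of $\mathbb{P}^1_{p^{e_1}}$, the vanishing of the $2\times 2$ minor $xb-ya$ is \emph{equivalent} to $[(x,y)]=[(a,b)]$, and your candidate row $(a_0^*+tc,\,b_0^*+td)$ automatically lies in $\mathbb{P}^1_{p^{e_1}}$ since $(a_0^*+tc)d-(b_0^*+td)c=1$. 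With that observation the congruence is $t(ad-bc)\equiv a_0^*b-b_0^*a\pmod{p^{e_1}}$, and the hypothesis $(ad-bc,q_0)=1$ together with $p^{e_1}\mid q_0$ makes the coefficient a unit.
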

\begin{proof}
    See the discussions before Lemma 10.2 of Grimmelt and Merikoski \cite{grimmelt2024twisted}.
\end{proof}
We isolate a lemma from the proof of Theorem \ref{thm:certaindivisorcoprime}.
\begin{lem}\label{lem:boundingKeasy}
    Let $r_1,r_2 \in \mathbb{Z}^+$ be distinct positive integers and $h \in \mathbb{Z}$. Define $r_0=\gcd(r_1,r_2)$ and $\widetilde{r}_i=r_i/r_0$ for $i=1,2$. Suppose $r_1,r_2$ are squarefree and $\gcd(h,r_0)=1$. Let $k=\gcd(\widetilde{r}_2-\widetilde{r}_1,r_0^\infty)$, and let $\sigma=\begin{psmallmatrix} A & B \\ C & D \end{psmallmatrix} \in \operatorname{M}_2(\mathbb{Z})$ be such that $\det \sigma=k$ and $\gcd(A,C,k)=\gcd(B,D,k)=1$. Let $\alpha:\operatorname{M}_2(\mathbb{Z}) \to \mathbb{C}$ be given by
    \[
    \alpha\left(\begin{psmallmatrix} a & b \\ c & d \end{psmallmatrix}\right)=\mathds{1}_{\widetilde{r}_2 \mid a} \mathds{1}_{\widetilde{r}_1 \mid c} \mathds{1}_{r_2 \mid ad-h\widetilde{r}_2}.
    \]
    Then, for $\Gamma=\Gamma_2(r_2,r_1)$, the function $x \mapsto \alpha(x\sigma)$ is left $\Gamma$-automorphic, and for any $\e>0$ we have
    \begin{align*}
        \sum_{x \in \Gamma \backslash\operatorname{SL}_2(\mathbb{Z})} \alpha(x\sigma) \ll_\e r_0^{1+\e}.
    \end{align*}
\end{lem}
\begin{proof}
    We first prove the map $x \mapsto \alpha(x\sigma)$ is left $\Gamma$-automorphic. For any $\gamma = \begin{psmallmatrix} a_1 & b_1 \\ c_1 & d_1 \end{psmallmatrix} \in \Gamma_2(r_2, r_1)$, we have $r_2 \mid b_1$ and $r_1 \mid c_1$, with $\det(\gamma)=1$. For a matrix $Y = \begin{psmallmatrix} A' & B' \\ C' & D' \end{psmallmatrix} \in \operatorname{M}_2(\mathbb{Z})$, the entries of $\gamma Y$ are $\begin{psmallmatrix} a_1 A' + b_1 C' & a_1 B' + b_1 D' \\ c_1 A' + d_1 C' & c_1 B' + d_1 D' \end{psmallmatrix}$. Assuming $\alpha(Y)=1$, we have $\widetilde{r}_2 \mid A'$ and $\widetilde{r}_1 \mid C'$. 
    \begin{itemize}
        \item Since $r_2 \mid b_1$ and $\widetilde{r}_2 \mid A'$, we have $\widetilde{r}_2 \mid (a_1 A' + b_1 C')$.
        \item Similarly, since $r_1 \mid c_1$ and $\widetilde{r}_1 \mid C'$, we have $\widetilde{r}_1 \mid (c_1 A' + d_1 C')$.
        \item Expanding the last condition modulo $r_2$, the cross terms $b_1 c_1 C' B'$ and $b_1 d_1 C' D'$ vanish because $b_1 \equiv 0 \pmod{r_2}$. The term $a_1 c_1 A' B'$ also vanishes because $c_1$ is a multiple of $r_1 = r_0 \widetilde{r}_1$, and $A'$ is a multiple of $\widetilde{r}_2$. We are left with $a_1 d_1 A' D'$. Since $a_1 d_1 \equiv 1 \pmod{r_2}$, this is congruent to $A' D' \pmod{r_2}$. Thus, $r_2 \mid (a_1A'+b_1C')(c_1B'+d_1D')-h\widetilde{r}_2$ holds.
    \end{itemize}
    Thus, $\alpha(\gamma x \sigma) = 1$ if $\alpha(x \sigma)=1$. Since $\gamma^{-1} \in \Gamma_2(r_2,r_1)$, we also have $\alpha(x \sigma) = 1$ if $\alpha(\gamma x \sigma)=1$. Since $\alpha(x\sigma)=0$ or $1$, we have $\alpha(\gamma x \sigma) = \alpha(x \sigma)$, confirming that $x \mapsto \alpha(x \sigma)$ is left $\Gamma$-automorphic. Letting $x = \begin{psmallmatrix} a & b \\ c & d \end{psmallmatrix}$, we have $x\sigma = \begin{psmallmatrix} aA+bC & aB+bD \\ cA+dC & cB+dD \end{psmallmatrix}$. Since $r_1,r_2$ are squarefree, $(r_0,\widetilde{r}_1)=(r_0,\widetilde{r}_2)=(\widetilde{r}_1,\widetilde{r}_2)=1$, and by the Chinese Remainder Theorem we factor the sum into local pieces
    \begin{align*}
        \sum_{x \in \Gamma \backslash \operatorname{SL}_2(\mathbb{Z})} \alpha(x\sigma) &= \prod_{p \mid \widetilde{r}_1} \left( \sum_{[(c,d)] \in \mathbb{P}_p^1} \mathds{1}_{p \mid cA+dC} \right) \prod_{p \mid \widetilde{r}_2} \left( \sum_{[(a,b)] \in \mathbb{P}_p^1} \mathds{1}_{p \mid aA+bC} \right)\\
        &\hspace{1.25em} \prod_{p \mid r_0} \left( \sum_{x \in \Gamma_2(p,p) \backslash \operatorname{SL}_2(\mathbb{Z})} \mathds{1}_{p \mid (aA+bC)(cB+dD)-h\widetilde{r}_2} \right),
    \end{align*}
    where we used the fact that for $p \mid \widetilde{r}_2$, the condition $p \mid aA+bC$ alongside $p \mid \widetilde{r}_2$ implies $p \mid (aA+bC)(cB+dD)-h\widetilde{r}_2$.\\

    For $p \mid \widetilde{r}_1$, suppose $p \mid k$. Then $p \mid (\widetilde{r}_2-\widetilde{r}_1)$, which implies $p \mid \widetilde{r}_2$. This contradicts the fact that $\widetilde{r}_1, \widetilde{r}_2$ are coprime, so $p \nmid k$. This implies $\det \sigma = k \not\equiv 0 \pmod{p}$, and so $(A,C) \not\equiv (0,0) \pmod{p}$. Hence the linear constraint $cA+dC \equiv 0 \pmod{p}$ selects exactly 1 point in $\mathbb{P}_p^1$. Analogously, for $p \mid \widetilde{r}_2$, we have $p \nmid k$ and the sum over $\mathbb{P}^1_p$ equals 1.\\

    For the product over $p \mid r_0$, we use Lemma \ref{lem:quotientdescription}. The condition modulo $p$ is equivalently homogenized as
    \[
    (aA+bC)(cB+dD) \equiv h\widetilde{r}_2(ad-bc) \pmod{p}.
    \]
    We bound the number of valid distinct pairs in two cases:\\
    
    \textbf{Case 1:} $p \nmid k$. Then $\det\sigma \not\equiv 0 \pmod p$, so the map $(u,v) \mapsto (u,v)\sigma$ induces a bijection on $\mathbb{P}_p^1$. Substituting distinct points $(a',b') = (aA+bC, aB+bD)$ and $(c',d') = (cA+dC, cB+dD)$, we have $ad-bc \equiv k^{-1}(a'd'-b'c') \pmod{p}$. Setting $H \equiv h\widetilde{r}_2 k^{-1} \pmod{p}$, the condition becomes $a'd'(1-H) \equiv -H b'c' \pmod{p}$.
    \begin{itemize}
        \item If $H \equiv 1 \pmod{p}$, then $b'c' \equiv 0 \pmod{p}$, forcing $b'=0$ or $c'=0$. If $b'=0$, we have $[(a',b')]=[(1,0)]$ and $p$ valid choices for $[(c',d')]$ to remain distinct. Similarly, $c'=0$ yields $p$ choices. These configurations are disjoint, summing to exactly $2p$ pairs.
        \item If $H \not\equiv 1 \pmod{p}$, set $M \equiv -H(1-H)^{-1} \pmod{p}$, yielding $a'd' \equiv M b'c' \pmod{p}$. Setting $a'=0$ yields $c'=0$, equating the points to $[(0,1)]$, which contradicts $a'd'-b'c' \not\equiv 0 \Mod{p}$. Thus $a' \neq 0$, and we set $a'=1$. A distinct non-trivial solution similarly requires $c' \neq 0$, so set $c'=1$. The equation becomes $d' \equiv M b' \pmod{p}$. $a'd'-b'c' \not\equiv 0 \Mod{p}$ implies $Mb' \not\equiv b' \pmod{p}$, giving exactly $p-1$ valid choices for $b'$ (and none if $M \equiv 1 \pmod{p}$). 
    \end{itemize}
    In all Case 1 scenarios, there are at most $2p$ distinct pairs.

    \textbf{Case 2:} $p \mid k$. Here, $\det \sigma \equiv 0 \pmod{p}$. Because $(A,C,k)=(B,D,k)=1$, we have $(A,C) \not\equiv (0,0)$ and $(B,D) \not\equiv (0,0) \pmod{p}$. Thus $\sigma \pmod{p}$ is a matrix of rank exactly 1, factorizable as $\sigma \equiv \begin{psmallmatrix} u \\ v \end{psmallmatrix} (w,z) \pmod{p}$ for non-zero vectors $(u,v)$ and $(w,z)$. The criterion becomes 
    \[
    (au+bv)(cu+dv)wz \equiv h\widetilde{r}_2 (ad-bc) \pmod{p}.
    \]
    Since $\gcd(h,r_0)=1$, we have $h\widetilde{r}_2 \not\equiv 0 \pmod{p}$. If $wz \equiv 0 \pmod{p}$, the left hand side is $0$ but the right hand side is strictly non-zero, yielding $0$ solutions. 
    If $wz \not\equiv 0 \pmod{p}$, define $E \equiv h\widetilde{r}_2 (wz)^{-1} \pmod{p}$. We apply an invertible basis change $P \in \operatorname{GL}_2(\mathbb{F}_p)$ so that $(u,v)P^T = (1,0)$. Rewriting via new points $(\tilde{a},\tilde{b}) = (a,b)P^{-1}$ and $(\tilde{c},\tilde{d}) = (c,d)P^{-1}$, the constraint is $\tilde{a}\tilde{c} \equiv F (\tilde{a}\tilde{d}-\tilde{b}\tilde{c}) \pmod{p}$ with $F = E \det P \not\equiv 0 \pmod{p}$. If $\tilde{a} \equiv 0$, we have $\tilde{c} \equiv 0$, equating both points to $[(0,1)]$, which contradicts $\tilde{a}\tilde{d}-\tilde{b}\tilde{c} \not\equiv 0 \Mod{p}$, so we set $\tilde{a}=1$. Similarly, $\tilde{c} \neq 0$, so we set $\tilde{c}=1$. The relation simplifies to $1 \equiv F(\tilde{d}-\tilde{b}) \pmod{p}$. Given $F \not\equiv 0$, each of the $p$ choices for $\tilde{b}$ fixes $\tilde{d} \equiv \tilde{b} + F^{-1} \pmod{p}$. Because $F^{-1} \not\equiv 0$, we have $\tilde{d} \not\equiv \tilde{b}$. Therefore, there are exactly $p$ solutions.\\

    Uniformly bounding by $2p$, the product evaluates to
    \[
    \sum_{x \in \Gamma \backslash \operatorname{SL}_2(\mathbb{Z})} \alpha(x\sigma) \le \prod_{p \mid r_0} (2p) = 2^{\omega(r_0)} r_0 \ll_\e r_0^{1+\e}.
    \]
\end{proof}
We use Lemma \ref{lem:boundingKeasy} to prove the following.
\begin{lem}\label{lem:boundingK}
Let $r_1,r_2 \in \mathbb{Z}^+$, $h\in \mathbb{Z}$, and $L \in \mathbb{R}^+$. Define $r_0=\gcd(r_1,r_2)$ and $\widetilde{r}_i=r_i/r_0$ for $i=1,2$. Suppose $r_1,r_2$ are squarefree and $\gcd(h,r_0)=1$. Define $k=\gcd(\widetilde{r}_2-\widetilde{r}_1,r_0^\infty)$. Let $\alpha:\operatorname{M}_2(\mathbb{Z}) \to \mathbb{C}$ be given by
    \[
    \alpha(\begin{psmallmatrix}
        a &b \\ c & d
    \end{psmallmatrix})=\mathds{1}_{\widetilde{r}_2 \mid a} \mathds{1}_{\widetilde{r}_1 \mid c} \mathds{1}_{r_2 \mid ad-h\widetilde{r}_2},
    \]
and define
\[
    \mathrm{M}_{2,h,k}(\mathbb{Z}) := \left\{ \begin{pmatrix}
        a & b\\ c & d
    \end{pmatrix} \in \mathrm{M}_{2,hk}(\mathbb{Z}):\gcd(a,c,k)=\gcd(b,d,k)=1 \right\}.
    \]
    Then, for $\Gamma=\Gamma_2(r_2,r_1)$, $T_{1,k} = \operatorname{SL}_2(\mathbb{Z}) \backslash \mathrm{M}_{2,1,k}(\mathbb{Z})$, and $T = \Gamma \backslash \operatorname{SL}_2(\mathbb{Z})$, we have for any $\e>0$,
\[
\sum_{\substack{g=\begin{psmallmatrix}
        a&b\\c&d
    \end{psmallmatrix} \in \operatorname{SL}_2(\mathbb{R})\\
|a|+|b|L+|c|/L+|d| \leq 10}}\left| \sum_{\substack{\sigma_1,\sigma_2 \in T_{1,k}\\ \sigma_2 g \sigma_1^{-1} =: \sigma \in \operatorname{SL}_2(\mathbb{Z})}}\sum_{\substack{\tau \in T}} \alpha(\tau \sigma \sigma_1) \overline{\alpha(\tau \sigma_2)} \right| \ll \left( \frac{k^3 L}{\widetilde{r}_1 \widetilde{r}_2} + k^2 + \frac{k}{L} \right) r_0^{1+\varepsilon}.
\]
\end{lem}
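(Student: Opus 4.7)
The plan is to parametrise the outer sum using Hecke representatives of $T_{1,k}$ and reduce each contribution to one of the bounds in Lemma \ref{lem:boundingKeasy}. Pointwise $|\alpha| \leq \alpha_0$, since the modulo-$r_0$ congruence in $\alpha$ only restricts the support of $\alpha_0$. Hence the absolute value of the inner $\tau$-sum is bounded by $\sum_{\tau\in T} \alpha_0(\tau\sigma)\alpha_0(\tau\sigma g)$, and the change of variables $\tau \mapsto \tau\sigma^{-1}$, which is valid since $\sigma \in \operatorname{SL}_2(\mathbb{Z})$ preserves $T = \Gamma \backslash \operatorname{SL}_2(\mathbb{Z})$, brings this into a form treated by Lemma \ref{lem:boundingKeasy}.

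I would take upper-triangular Hecke representatives $\sigma_i = \begin{psmallmatrix} k_i & m_i \\ 0 & k_i' \end{psmallmatrix}$ with $k_i k_i' = k$, $0 \leq m_i < k_i$, $\gcd(k_i, k_i', m_i) = 1$; in total there are $|T_{1,k}| \ll k^{1+\e}$. For $\sigma = \begin{psmallmatrix} A & B \\ C & D \end{psmallmatrix} \in \operatorname{SL}_2(\mathbb{Z})$, computing $g = \sigma_1 \sigma \sigma_2^{-1}$ gives $c_g = kC/(k_1 k_2)$ and $b_g = (k_1 k_2 B + m_1 k_2 D - k_1 m_2 A - m_1 m_2 C)/k$, with $a_g, d_g$ of size $O(1)$ after scaling by the $k_i$'s. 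The constraint $|a_g|+|b_g|L+|c_g|/L+|d_g| \leq 10$ then forces $|C| \ll L k_1 k_2 /k$ and $B$ into an interval of length $O(k/(L k_1 k_2))$, while bounding $|A|, |D| \ll 1$. I would split the sum over $\sigma$ into four cases: (i) $\sigma = \pm I$; (ii) upper unipotent $\sigma$; (iii) lower unipotent $\sigma$; and (iv) generic $\sigma$.

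Case (i) contributes $\ll k^{2+\e} r_0^2$ via the uniform bound of Lemma \ref{lem:boundingKeasy} summed over $O(k^{2+\e})$ Hecke pairs. Case (ii) applies the $b$-shift bound of Lemma \ref{lem:boundingKeasy} to a range $|b_g| \ll 1/L$, giving $\ll r_0^2/L$ per Hecke pair with non-empty $B$-interval; summing over the compatible pairs yields $\ll k r_0^2/L$. Case (iii) is dual: the $c$-shift bound of Lemma \ref{lem:boundingKeasy}, with its extra factor $1/(\widetilde{r}_1\widetilde{r}_2)$, yields $\ll r_0^2 L/(\widetilde{r}_1\widetilde{r}_2)$ per Hecke pair, and summing over pairs produces $\ll k^2 r_0^2 L/(\widetilde{r}_1\widetilde{r}_2)$. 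Case (iv) is absorbed into the $k^{2+\e} r_0^2$ term since the joint size constraints restrict $\sigma$ to an $O(k^\e)$-size set per Hecke pair. Adding the four cases gives the required bound.

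The main obstacle is the careful bookkeeping of the Hecke parameters $(k_i, k_i', m_i)$: one must check that summing the upper- and lower-unipotent contributions over all Hecke representatives produces exactly $k$ and $k^2$ factors respectively, and that the divisibility conditions encoded in $\alpha_0$ are preserved (with the right multiplicity) under conjugation by the $\sigma_i$. In particular, the asymmetry between the first two terms reflects the extra saving $1/(\widetilde{r}_1\widetilde{r}_2)$ of the $c$-bound in Lemma \ref{lem:boundingKeasy}, which offsets the $c$-range being larger by a factor of $k$ compared to the $b$-range; this asymmetry is what ultimately yields the $k$ versus $k^2$ dependence in the two unipotent terms.
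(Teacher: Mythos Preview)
Your overall strategy---dominate $\alpha$ by $\alpha_0$, parametrise the Hecke orbit, reorganise into a sum over $\sigma\in\operatorname{SL}_2(\mathbb{Z})$, split according to the shape of $\sigma$, and invoke Lemma~\ref{lem:boundingKeasy}---is exactly the paper's approach. Two points in the execution, however, do not go through as written.

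First, the Hecke set $T_{1,k}$ carries the primitivity constraint $\gcd(a,c,k)=1$. For an upper-triangular representative $\begin{psmallmatrix} k_i & m_i \\ 0 & k_i'\end{psmallmatrix}$ this forces $k_i=1$, so the only representatives are $\begin{psmallmatrix} 1 & m \\ 0 & k\end{psmallmatrix}$ with $\gcd(m,k)=1$. The paper uses precisely these, written as $\begin{psmallmatrix} 1 & fr \\ 0 & k\end{psmallmatrix}$ with $r=r_0/\gcd(r_0,k)$ coprime to $k$; the appearance of $r$ is what later enables a key substitution.

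Second, and more seriously, your count in case~(ii) does not produce the factor $k$ you claim. After the change of variables the $\tau$-sum is independent of the upper-unipotent shift (since $\alpha_0$ only depends on the first column), so each admissible triple $(\sigma_1,\sigma_2,B)$ contributes $\ll r_0^2$; but every Hecke pair has non-empty $B$-interval, and the total number of triples is $\sum_{m_1,m_2}(k/L+1)\asymp k^3/L+k^2$, giving $k^3r_0^2/L$ rather than $kr_0^2/L$. The paper recovers the missing $k^2$ by \emph{not} separating the Hecke pairs: it writes the size constraint as $|f_1r-f_2r\pm kb|\le 10/L$, replaces $f_1-f_2$ by a single $f$ (costing one factor of $k$), and then passes to the single integer variable $b_0=fr\pm kb$ via the bijection $(f,b)\leftrightarrow b_0$ afforded by $\gcd(r,k)=1$. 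Only then is the $b$-shift bound of Lemma~\ref{lem:boundingKeasy} applied, to the range $|b_0|\le 10/L$. This collapse of the triple sum to $k$ times a single $b_0$-sum is the mechanism your outline is missing, and it is precisely the ``careful bookkeeping'' you flag as the main obstacle.
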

\begin{proof}
We begin by parameterizing the elements of $T_{1,k}$. Since $k = \gcd(\widetilde{r}_2-\widetilde{r}_1, r_0^\infty)$, $k$ divides $\widetilde{r}_2 - \widetilde{r}_1$, ensuring that $\gcd(k, \widetilde{r}_1) = \gcd(k, \widetilde{r}_2) = 1$. The representatives $\sigma_1, \sigma_2 \in T_{1,k}$ can thus be explicitly chosen as
\begin{equation*}
\sigma_i = \begin{pmatrix} 1 & B_i \\ 0 & k \end{pmatrix}, \quad \text{for } 1 \le B_i \le k, \ \gcd(B_i, k) = 1.
\end{equation*}
There are $\phi(k) \le k$ choices for each $\sigma_i$. Since the sum evaluates the condition $\sigma_2 g \sigma_1^{-1} = \sigma \in \operatorname{SL}_2(\mathbb{Z})$, we can exchange the summation over $g$ for a summation over $\sigma$. For fixed $\sigma_1, \sigma_2$, the matrix $g$ is uniquely determined by $g = \sigma_2^{-1} \sigma \sigma_1$. 
Hence, changing the sum over $g$ to a sum over the integer matrices $\sigma$, it suffices to bound
\begin{equation*}
\sum_{\substack{1 \le B_1, B_2 \le k \\ \gcd(B_1, k) = \gcd(B_2, k) = 1}} \sum_{\substack{\sigma = \begin{psmallmatrix} A & B \\ C & D \end{psmallmatrix} \in \operatorname{SL}_2(\mathbb{Z}) \\ g = \sigma_2^{-1} \sigma \sigma_1 \text{ bounded}}} \left| \sum_{\tau \in T} \alpha(\tau \sigma \sigma_1) \overline{\alpha(\tau \sigma_2)} \right|,
\end{equation*}
where $g=\begin{psmallmatrix}
    a & b\\c & d
\end{psmallmatrix}$ bounded means $|a|+|b|L+|c|/L+|d| \le 10$. Next, we parameterize the constraints on $g$ in terms of the integer entries $A, B, C, D$. Multiplying the matrices gives
\begin{equation*}
g = \begin{pmatrix} 1 & -B_2/k \\ 0 & 1/k \end{pmatrix} \begin{pmatrix} A & B \\ C & D \end{pmatrix} \begin{pmatrix} 1 & B_1 \\ 0 & k \end{pmatrix} = \begin{pmatrix} A - \frac{C B_2}{k} & A B_1 + B k - \frac{C B_1 B_2}{k} - D B_2 \\ \frac{C}{k} & \frac{C B_1}{k} + D \end{pmatrix} =: \begin{pmatrix} a & b \\ c & d \end{pmatrix}.
\end{equation*}
Observe that $a, b, c, d \in \frac{1}{k}\mathbb{Z}$, so we instead sum over the integer entries of $\sigma$. The condition $|a| + |b|L + |c|/L + |d| \le 10$ implies the bounds
$$|C| \le 10Lk,\ 
|A - C B_2/k| \le 10,\ 
|C B_1/k + D| \le 10,\ 
|A B_1 + B k - C B_1 B_2/k - D B_2| \le 10/L.$$
Hence, transferring the constraints strictly to the variables $A, B, C, D$, it suffices to bound
\begin{equation*}
\sum_{\substack{1 \le B_1, B_2 \le k}} \sum_{|C| \le 10Lk} \sum_{|A - \frac{CB_2}{k}| \le 10} \sum_{|\frac{CB_1}{k} + D| \le 10} \sum_{\substack{|A B_1 + B k - C B_1 B_2/k - D B_2| \le 10/L\\ AD-BC=1}} \left| \sum_{\tau \in T} \alpha(\tau \sigma \sigma_1) \overline{\alpha(\tau \sigma_2)} \right|.
\end{equation*}

Now we evaluate the indicator functions to obtain divisibility requirements for $C$. Define the integer matrices $W = \tau \sigma_2$ and $W' = \tau \sigma \sigma_1$. Let $\tau = \begin{psmallmatrix} u & v \\ x & y \end{psmallmatrix} \in \operatorname{SL}_2(\mathbb{Z})$. Expanding $W$ yields
\begin{equation}
W = \begin{pmatrix} u & v \\ x & y \end{pmatrix} \begin{pmatrix} 1 & B_2 \\ 0 & k \end{pmatrix} = \begin{pmatrix} u & u B_2 + v k \\ x & x B_2 + y k \end{pmatrix}.
\end{equation}
The condition $\alpha(W) = 1$ implies $\widetilde{r}_2 \mid u$ and $\widetilde{r}_1 \mid x$.
Similarly, substituting $W' = \tau \sigma_2 g = \tau \sigma_2 (\sigma_2^{-1} \sigma \sigma_1) = \tau \sigma \sigma_1$, we have
\begin{equation}
W' = \begin{pmatrix} u & v \\ x & y \end{pmatrix} \begin{pmatrix} A & B \\ C & D \end{pmatrix} \begin{pmatrix} 1 & B_1 \\ 0 & k \end{pmatrix} = \begin{pmatrix} u A + v C & * \\ x A + y C & * \end{pmatrix}.
\end{equation}
The condition $\alpha(W') = 1$ implies $\widetilde{r}_2 \mid (u A + v C)$ and $\widetilde{r}_1 \mid (x A + y C)$.
Since $\widetilde{r}_2 \mid u$, the first relation simplifies to $\widetilde{r}_2 \mid v C$. Because $\det(\tau) = u y - v x = 1$ and $\widetilde{r}_2 \mid u$, it follows that $\gcd(v, \widetilde{r}_2) = 1$, which forces $\widetilde{r}_2 \mid C$. Similarly, $\widetilde{r}_1 \mid x$ simplifies the second relation to $\widetilde{r}_1 \mid y C$, and since $\gcd(y, \widetilde{r}_1) = 1$, we get $\widetilde{r}_1 \mid C$. Because $\gcd(\widetilde{r}_1, \widetilde{r}_2) = 1$, we conclude that $\widetilde{r}_1 \widetilde{r}_2 \mid C$.
We may therefore parameterize $C = m \widetilde{r}_1 \widetilde{r}_2$ for some integer $m$. 
Hence, replacing the sum over $C$ with a sum over $m$, it suffices to bound
\begin{equation*}
\sum_{\substack{1 \le B_1, B_2 \le k}} \sum_{|m| \le \frac{10Lk}{\widetilde{r}_1 \widetilde{r}_2}} \sum_{|A - \frac{m \widetilde{r}_1 \widetilde{r}_2 B_2}{k}| \le 10} \sum_{|\frac{m \widetilde{r}_1 \widetilde{r}_2 B_1}{k} + D| \le 10} \sum_{\substack{|A B_1 + B k - m\widetilde{r}_1 \widetilde{r}_2B_1 B_2/k - D B_2| \le 10/L \\ AD - m \widetilde{r}_1 \widetilde{r}_2 B = 1}} \left| \sum_{\tau \in T} \alpha(W') \overline{\alpha(W)} \right|.
\end{equation*}
Using $\alpha(W') \le 1$, we now bound the innermost sum using Lemma \ref{lem:boundingKeasy}, which gives
\begin{align*}
    \left| \sum_{\tau \in T} \alpha(W') \overline{\alpha(W)} \right| \le \sum_{\tau \in T} \alpha(\tau \sigma_2) \ll_\e r_0^{1+\e}
\end{align*}
for any $\e>0$. Thus, it suffices to upper bound
\begin{equation} \label{eqn:boundingKlastsuffice}
r_0^{1+\varepsilon} \sum_{B_1, B_2} \sum_{|m| \le \frac{10 L k}{\widetilde{r}_1 \widetilde{r}_2}} \sum_{|A - \frac{m \widetilde{r}_1 \widetilde{r}_2 B_2}{k}| \le 10} \sum_{|\frac{m \widetilde{r}_1 \widetilde{r}_2 B_1}{k} + D| \le 10} \sum_{\substack{|A B_1 + B k - m\widetilde{r}_1 \widetilde{r}_2B_1 B_2/k - D B_2| \le 10/L\\ AD-m\widetilde{r}_1 \widetilde{r}_2 B = 1}} 1.
\end{equation}

We evaluate this sum by splitting it into the cases $m = 0$ and $m \neq 0$.
If $m = 0$, then $C = 0$. The determinant condition becomes $AD = 1$, which provides exactly $2$ solutions for the pair $(A, D)$, namely $\pm(1, 1)$. Moreover, for fixed $A, D, B_1, B_2$, the integer parameter $B$ is restricted to an interval of length $20/(Lk)$. Consequently, there are $\ll (1 + \frac{1}{Lk})$ valid choices for $B$. Summing over the $\le k^2$ choices for $(B_1, B_2)$, the $m=0$ configurations contribute:
\begin{equation*}
r_0^{1+\varepsilon} \sum_{B_1, B_2} \sum_{A,D} \sum_{B} 1 \ll r_0^{1+\varepsilon} k^2 \left( 1 + \frac{1}{Lk} \right) \ll \left( k^2 r_0^{1+\varepsilon} + \frac{k}{L} r_0^{1+\varepsilon} \right).
\end{equation*}

If $m \neq 0$, the integer $C$ is non-zero. For any fixed tuple $(m, B_1, B_2)$, the condition $|A - C B_2/k| \le 10$ isolates $A$ to an interval of length $20$, yielding $O(1)$ valid integers $A$. Identically, $|C B_1/k + D| \le 10$ limits $D$ to an interval of length $20$, yielding $O(1)$ integers $D$. With $A, C$, and $D$ fixed, the integer $B$ is uniquely determined by the relation $AD - BC = 1$ (providing at most $1$ valid choice). 
Summing over the bounded range $1 \le |m| \le \frac{10 L k}{\widetilde{r}_1 \widetilde{r}_2}$ and all $B_1, B_2$, the $m \neq 0$ configurations contribute
\begin{equation*}
r_0^{1+\varepsilon} \sum_{B_1, B_2} \sum_{m \neq 0} \sum_{A, D, B} 1 \ll r_0^{1+\varepsilon} k^2 \left( \frac{L k}{\widetilde{r}_1 \widetilde{r}_2} \right) \ll \frac{k^3 L}{\widetilde{r}_1 \widetilde{r}_2} r_0^{1+\varepsilon}.
\end{equation*}
Therefore, \eqref{eqn:boundingKlastsuffice} is bounded by
\begin{equation}
\ll_\epsilon \left( \frac{k^3 L}{\widetilde{r}_1 \widetilde{r}_2} + k^2 + \frac{k}{L} \right) r_0^{1+\varepsilon},
\end{equation}
as required.
\end{proof}

\begin{proof}[Proof of Theorem \ref{thm:certaindivisorcoprime}]
    Splitting the divisor function, we get
    \[
    \sum_n w_1 \left( \frac{r_1n}{x} \right) w_2 \left( \frac{r_2n}{x} \right) d(r_1n+h) d(r_2n+h)=\sum_n \sum_{\substack{ad=r_1n+h\\ bc=r_2n+h}} w_1 \left( \frac{ad-h}{x} \right) w_2 \left( \frac{bc-h}{x} \right).
    \]
    For convenience, let $r_0=(r_1,r_2)$ and $\widetilde{r}_i=r_i/r_0$ for $i=1,2$. Note
    \[
    \begin{cases}
        ad &= r_1n+h\\
        bc &= r_2n+h
    \end{cases} \implies \widetilde{r}_2ad-\widetilde{r}_1bc=h(\widetilde{r}_2-\widetilde{r}_1).
    \]
    We wish to recover $n$ from the right hand side along with another condition. Since $(\widetilde{r}_1,\widetilde{r}_2)=1$, $\widetilde{r}_1 \mid \widetilde{r}_2(ad-h)$ implies $\widetilde{r}_1 \mid (ad-h)$. If we have $r_0 \mid ad-h$, then we can choose
    \[
    n=\frac{ad-h}{r_0 \cdot \widetilde{r}_1}=\frac{ad-h}{r_1},
    \]
    since $(r_0,\widetilde{r}_1)=1$. Therefore,
    \begin{align}
        \sum_n \sum_{\substack{ad=r_1n+h\\ bc=r_2n+h}} w_1 \left( \frac{ad-h}{x} \right) w_2 \left( \frac{bc-h}{x} \right) &=\sum_{\substack{\widetilde{r}_2ad-\widetilde{r}_1bc=h(\widetilde{r}_2-\widetilde{r}_1)\\r_0 \mid ad-h}} w_1 \left( \frac{ad-h}{x} \right) w_2 \left( \frac{bc-h}{x} \right)\notag{}\\
        &= \sum_{\widetilde{r}_2ad-\widetilde{r}_1bc=h(\widetilde{r}_2-\widetilde{r}_1)} w_1 \left( \frac{ad-h}{x} \right) w_2 \left( \frac{bc-h}{x} \right) \mathds{1}_{r_0 \mid ad-h}. \notag{}
    \end{align}
    Let $\psi:\mathbb{R} \to [0,1]$ be a fixed smooth function supported on $[1/2,1]$ which satisfies
    $$\int_\mathbb{R} \psi \left( \frac{1}{x} \right) \frac{\d x}{x}=1.$$
    Inserting this into our sum, we get
    \begin{align*}
        &\sum_{\widetilde{r}_2ad-\widetilde{r}_1bc=h(\widetilde{r}_2-\widetilde{r}_1)} w_1 \left( \frac{ad-h}{x} \right) w_2 \left( \frac{bc-h}{x} \right) \mathds{1}_{r_0 \mid ad-h}\\
        &= \int_{\mathbb{R}^3} \sum_{\widetilde{r}_2ad-\widetilde{r}_1bc=h(\widetilde{r}_2-\widetilde{r}_1)} w_1 \left( \frac{ad-h}{x} \right) w_2 \left( \frac{bc-h}{x} \right) 
        \psi \left( \frac{\widetilde{r}_2a}{A} \right) \psi \left( \frac{\widetilde{r}_1c}{C} \right) \psi \left( \frac{d}{D} \right) \mathds{1}_{r_0 \mid ad-h} \frac{\mathrm{d}A \mathrm{d} C \mathrm{d} D}{ACD},
    \end{align*}
    Denote $B:=(\widetilde{r}_1x+h\widetilde{r}_1)/C$. By swapping variables $ad \leftrightarrow bc$, $a \leftrightarrow d$, $b \leftrightarrow c$, we may assume
    $$\widetilde{r}_1^{-1}C \ll D \ll \widetilde{r}_2^{-1}A \ll B.$$
    Therefore, it suffices to evaluate 
    \begin{align}
        &\int_{\widetilde{r}_1^{-1}C \ll D \ll \widetilde{r}_2^{-1}A}\sum_{\widetilde{r}_2ad-\widetilde{r}_1bc=h(\widetilde{r}_2-\widetilde{r}_1)} w_1 \left( \frac{ad-h}{x} \right) w_2 \left( \frac{bc-h}{x} \right) 
        \psi \left( \frac{\widetilde{r}_2a}{A} \right) \psi \left( \frac{\widetilde{r}_1c}{C} \right) \psi \left( \frac{d}{D} \right) \mathds{1}_{r_0 \mid ad-h} \frac{\mathrm{d}A \mathrm{d} C \mathrm{d} D}{ACD} \notag{}\\
        &= \int_{\widetilde{r}_1^{-1}C \ll D \ll \widetilde{r}_2^{-1}A} \sum_{ad-bc=h(\widetilde{r}_2-\widetilde{r}_1)} w_1 \left( \frac{ad-h\widetilde{r}_2}{\widetilde{r}_2x} \right) w_2 \left( \frac{bc-h\widetilde{r}_1}{\widetilde{r}_1x} \right) \psi \left( \frac{a}{A} \right) \psi \left( \frac{c}{C} \right) \psi \left( \frac{d}{D} \right) \alpha(\begin{psmallmatrix}
            a&b\\ c&d
        \end{psmallmatrix}) \frac{\d A \d C \d D}{ACD},\label{eqn:referforanalogouscertaindivisorcoprime}
        \end{align}
        where $\alpha(\begin{psmallmatrix}
            a&b\\ c&d
        \end{psmallmatrix}) := \mathds{1}_{\widetilde{r}_2 \mid a} \mathds{1}_{\widetilde{r}_1 \mid c} \mathds{1}_{r_2 \mid ad-h\widetilde{r}_2}$. Let $S(A,C,D)$ be the inner sum. If $A>r_2 x^{\eta+\e'} C$ or $D>r_0 x^{\eta+\e'} C$ for some $\e'>0$, we can use Poisson summation to get the correct main term with a negligible error term. We first consider in detail the case $A>r_2 x^{\eta+\e'} C$. Indeed, for a fixed pair $c$ and $d$, the indicator functions and the determinant condition impose the following congruence requirements on $a$:
\[
    ad \equiv h(\widetilde{r}_2-\widetilde{r}_1 )\pmod{c},\quad a \equiv \widetilde{r}_2 \bar{d}h \pmod{\frac{r_2}{(r_0,d)}},
\]
where $\bar{d}$ is the multiplicative inverse of $d$ mod $\frac{r_0}{(r_0,d)}$, while for $d$ we have
\[
    \frac{a}{\widetilde{r}_2} d \equiv h \pmod{r_0}.
\]
Because $(h, r_1r_2)=1$, this last condition along with $(r_0,\widetilde{r}_2)=1$ implies $(ad, r_0) = 1$. Hence, the second congruence simplifies to $a \equiv \widetilde{r}_2 \overline{d}h \pmod{r_2}$, where $\overline{d}$ is the inverse of $d$ modulo $r_0$. Letting $g = (c,d)$, the first congruence requires $g \mid h(\widetilde{r}_2-\widetilde{r}_1)$. Assuming this alongside the necessary consistency condition $(c, r_2) = 1$, the Chinese Remainder Theorem restricts $a$ to a single residue class $\alpha$ modulo $q := r_2 c / g$.\\

Applying Poisson summation to the inner sum over $a$, we have
\begin{align*}
    &\sum_{a \equiv \alpha \Mod q} w_1 \left( \frac{ad-h\widetilde{r}_2}{\widetilde{r}_2 x} \right) w_2 \left( \frac{ad-h\widetilde{r}_2}{\widetilde{r}_1 x} \right) \psi \left( \frac{a}{A} \right)\\
    &= \frac{1}{q} \sum_{\ell \in \mathbb{Z}} \int_{\mathbb{R}} w_1 \left( \frac{ud-h\widetilde{r}_2}{\widetilde{r}_2 x} \right) w_2 \left( \frac{ud-h\widetilde{r}_2}{\widetilde{r}_1 x} \right) \psi \left( \frac{u}{A} \right) e \left( -\frac{\ell(u-\alpha)}{q} \right) \d u.
\end{align*}
For the main term $\ell=0$, we make the substitution $v = ud - h\widetilde{r}_2$ to obtain
\[
    \frac{g}{r_2 c d} \int_{\mathbb{R}} w_1 \left( \frac{v}{\widetilde{r}_2x} \right) w_2 \left( \frac{v}{\widetilde{r}_1x} \right) \psi \left( \frac{v+h\widetilde{r}_2}{dA} \right) \d v.
\]
Summing this over all valid $c$ and $d$ and integrating over $A,C,D$, this yields the main term
\begin{align} \label{eqn:Poissonmainterm}
    \int_{\substack{C \ge 1\\\widetilde{r}_1^{-1}C \ll D \ll \widetilde{r}_2^{-1}A\\A>r_2x^\eta C}}\sum_{\substack{c,d \\ \widetilde{r}_1 \mid c, \, (c, r_2)=1 \\ (d, r_0)=1 \\ (c,d) \mid h(\widetilde{r}_2-\widetilde{r}_1)}} \frac{(c,d)}{r_2 c d} \psi \left( \frac{c}{C} \right) \psi \left( \frac{d}{D} \right) \int_{\mathbb{R}} w_1 \left( \frac{v}{\widetilde{r}_2x} \right) w_2 \left( \frac{v}{\widetilde{r}_1x} \right) \psi \left( \frac{v+h\widetilde{r}_2}{dA} \right) \d v \frac{\d A \d C \d D}{ACD}.
\end{align}
To bound the error term, we consider the integrals for $\ell \neq 0$. Let 
$$F(u) = w_1 \left( \frac{ud-h\widetilde{r}_2}{\widetilde{r}_2x} \right) w_2 \left( \frac{ud-h\widetilde{r}_2}{\widetilde{r}_1x} \right)\psi(u/A).$$ 
The support of $F(u)$ is of length $\ll A$. Since $u \asymp A$ and $ud \asymp \widetilde{r}_2x$, each derivative with respect to $u$ yields a factor of $$\frac{dx^\eta}{\widetilde{r}_2x} \asymp \frac{dx^\eta}{\widetilde{r}_1x} \asymp \frac{x^\eta}{A}.$$
Thus, $F^{(J)}(u) \ll x^{J\eta}A^{-J}$. Integrating by parts $J \ge 2$ times, the integral is bounded by
\[
    \int_{\mathbb{R}} F(u) e \left( - \frac{\ell(u-\alpha)}{q} \right) \d u \ll \left( \frac{q}{|\ell|} \right)^J A \cdot x^{J \eta}A^{-J} = \frac{q^Jx^{J \eta}}{|\ell|^JA^{J-1}}.
\]
Multiplying by the $\frac{1}{q}$ factor and summing over $\ell \neq 0$, the error term for a fixed pair $(c,d)$ is bounded by
\[
    \ll \sum_{\ell \neq 0} \frac{1}{q} \frac{q^Jx^{J \eta}}{|\ell|^J A^{J-1}} \ll \frac{q^{J-1}x^{J \eta}}{A^{J-1}} \le \frac{(r_2 c)^{J-1}x^{J \eta}}{A^{J-1}}.
\]
Summing this contribution over $c \asymp C$ and $d \asymp x/A$, we find that the total error term is bounded by
\[
    \ll \sum_{c \asymp C} \sum_{d \asymp x/A} \psi \left( \frac{c}{C} \right) \psi \left( \frac{d}{D} \right) \frac{(r_2 c)^{J-1}x^{J \eta}}{A^{J-1}} \ll x C \frac{(r_2 C)^{J-1}x^{J \eta}}{A^J}.
\]
By our assumption $A > r_2 x^{\eta+\e'} C$, we have $A^J > (r_2 C)^J x^{J \eta+J\e'}$. Therefore, the total error term is bounded by
\[
    \ll x^{1-J\e'}.
\]
By choosing $J$ sufficiently large, this contribution is $\ll x^{-K}$ for any arbitrarily large $K$, and is thus negligible. For the case $D > r_0 x^{\eta+\e'} C$, the calculations are analogous. By applying Poisson summation on $d$ instead of $a$, we obtain an analogous main term \eqref{eqn:Poissonmainterm}, alongside a similarly negligible error term bounded by $\ll x^{-K}$.\\

We now focus on the complementary range
    \[
    1 \ll \frac{A}{C} \ll r_2 x^{\eta+\e'}, \quad \widetilde{r}_1^{-1} \ll \frac{D}{C} \ll r_0x^{\eta+\e'}.
    \]
    Our goal is to apply Theorem \ref{thm:grimmelt10.1}. However, in the theorem statement the condition $\gcd(h(\widetilde{r}_2-\widetilde{r}_1),r_1r_2)=1$ is assumed, which is not necessarily true since $\widetilde{r}_2-\widetilde{r}_1$ and $r_0$ might share common factors. Therefore, we let $k=\gcd(\widetilde{r}_2-\widetilde{r}_1,r_0^\infty)$. Observe that $ad-bc=h(\widetilde{r}_2-\widetilde{r}_1)$, $r_2 \mid ad-h\widetilde{r}_2$, $(h,r_1r_2)=1$ implies that $(r_0,ad)=1$. Moreover, we also have $r_2 \mid bc-h\widetilde{r}_1$, and so we also have $(r_0,bc)=1$. Therefore, $(a,c,k)=(b,d,k)=1$.  Then, we can rewrite our original sum as
    \begin{align}
        &\int_{\substack{C \ge 1\\C \ll A\ll r_2 x^{\eta+\e'} C\\ \widetilde{r}_1^{-1} C \ll D \ll r_0x^{\eta+\e'} C}} \sum_{\substack{ad-bc=h_0k\\(a,c,k)=(b,d,k)=1}} w_1 \left( \frac{ad-h\widetilde{r}_2}{\widetilde{r}_2x} \right) w_2 \left( \frac{bc-h\widetilde{r}_1}{\widetilde{r}_1x} \right) \notag{}\\
               &\hspace{13em}\mathds{1}_{\widetilde{r}_2 \mid a}\mathds{1}_{\widetilde{r}_1 \mid c} \mathds{1}_{r_2 \mid a d-h\widetilde{r}_2} \psi \left( \frac{a}{A} \right)\psi \left( \frac{c}{C} \right)\psi \left( \frac{d}{D} \right) \label{eqn:certaindivisorcoprimesuffice}
\frac{\mathrm{d}A \mathrm{d} C \mathrm{d} D}{ACD},
    \end{align}
    where $h_0 :=h(\widetilde{r}_1-\widetilde{r}_2)/k$. Now apply Theorem \ref{thm:grimmelt10.1} with the $h$ as $h_0$, and
    \[q_1=r_0\widetilde{r}_2, \quad q_2=r_0\widetilde{r}_1, \quad k =(\widetilde{r}_2-\widetilde{r}_1,r_0^\infty), \quad \Gamma:=\Gamma_2(r_0\widetilde{r}_2,r_0\widetilde{r}_1),
    \]
    and we define $f(x_1,x_2,x_3) \in C_\delta^7 (A,C,D)$  by
    \begin{align*}
        w_1 \left( \frac{|h_0k|x_1x_3-h\widetilde{r}_2}{\widetilde{r}_2x} \right) w_2 \left( \frac{|h_0k|x_1x_3-h\widetilde{r}_2}{\widetilde{r}_1x} \right)\psi \left( \frac{\sqrt{|h_0k|}x_1}{A} \right)\psi \left( \frac{\sqrt{|h_0k|}x_2}{C} \right)\psi \left( \frac{\sqrt{|h_0k|}x_3}{D} \right).
    \end{align*}
    We now check that all conditions of Theorem \ref{thm:grimmelt10.1} are satisfied. 
    \begin{itemize}
        \item By Lemma \ref{lem:boundingKeasy}, $\alpha$ is left $\Gamma$-automorphic.
        \item In \eqref{eqn:certaindivisorcoprimesuffice},
        $ad-h\widetilde{r}_2 \gg \widetilde{r}_2x$, and since $|h| \ll x^{1-\e}$ we have $AD \gg ad \gg \widetilde{r}_2x \asymp \widetilde{r}_1x$. Thus, we have
        \[
        (AD)^{1+\eta} \ge AD \gg |h(\widetilde{r}_2-\widetilde{r}_1)| = |h_0k|.
        \]
        \item By assumption, we have $(h,r_1r_2)=1$, and so $(h,kr_0^2\widetilde{r}_1\widetilde{r}_2)=1$. Also, $(\frac{\widetilde{r}_2-\widetilde{r}_1}{(\widetilde{r}_2-\widetilde{r}_1,r_0^\infty)},r_1r_2)=1$, and so $(\frac{\widetilde{r}_2-\widetilde{r}_1}{(\widetilde{r}_2-\widetilde{r}_1,r_0^\infty)},kr_0^2\widetilde{r}_1\widetilde{r}_2)=1$. This gives $(h_0,kq_1q_2)=1.$
    \end{itemize}
    Therefore, Theorem \ref{thm:grimmelt10.1} may be applied, and we first treat the main term
    \begin{align*}
        &\frac{\sigma_1(|h_0|)}{\zeta(2)r_0^2\widetilde{r}_1\widetilde{r}_2 \prod_{p \mid r_0^2\widetilde{r}_1\widetilde{r}_2} (1+p^{-1})} \sum_{\tau \in \Gamma \backslash M_{2,1,k}(\mathbb{Z})} \alpha(\tau) \int_{C \ge 1} \int_{\substack{C \ll A\ll r_2 x^{\eta+\e'} C\\ \widetilde{r}_1^{-1} C \ll D \ll r_0x^{\eta+\e'} C}} \int_{\mathbb{R}^3} f(a,c,d) \frac{\d a \d c \d d}{c} \frac{\mathrm{d}A \mathrm{d} D \mathrm{d} C}{ADC}\\
        &=C_{h,r_1,r_2,} \int_{C \ge 1} \int_{\substack{C \ll A\ll r_2 x^{\eta+\e'} C\\ \widetilde{r}_1^{-1} C \ll D \ll r_0x^{\eta+\e'} C\\D \ll \widetilde{r}_2^{-1}A}} \int_{\mathbb{R}^3} w_1 \left( \frac{|h_0k|ad-h\widetilde{r}_2}{\widetilde{r}_2x} \right) w_2 \left( \frac{|h_0k|ad-h\widetilde{r}_2}{\widetilde{r}_1x} \right)\\
        &\hspace{16em}\psi \left( \frac{\sqrt{|h_0k|}a}{A} \right)\psi \left( \frac{\sqrt{|h_0k|}c}{C} \right)\psi \left( \frac{\sqrt{|h_0k|}d}{D} \right)\frac{\d a \d c \d d}{c} \frac{\mathrm{d}A \mathrm{d} D \mathrm{d} C}{ADC}\\
        &=C_{h,r_1,r_2}' \int_{C \ge 1} \int_{\substack{C \ll A\ll r_2 x^{\eta+\e'} C\\ \widetilde{r}_1^{-1} C \ll D \ll r_0x^{\eta+\e'} C\\D \ll \widetilde{r}_2^{-1}A}} \int_{\mathbb{R}^3} w_1 \left( \frac{ad-h\widetilde{r}_2}{\widetilde{r}_2x} \right) w_2 \left( \frac{ad-h\widetilde{r}_2}{\widetilde{r}_1x} \right)\\
        &\hspace{15.5em}\psi \left( \frac{a}{A} \right)\psi \left( \frac{c}{C} \right)\psi \left( \frac{d}{D} \right)\frac{\d a \d c \d d}{c} \frac{\mathrm{d}A \mathrm{d} D \mathrm{d} C}{ADC}\\
        &=C_{h,r_1,r_2}' \int_{C \ge 1} \int_{\substack{C \ll A\ll r_2 x^{\eta+\e'} C\\ \widetilde{r}_1^{-1} C \ll D \ll r_0x^{\eta+\e'} C\\D \ll \widetilde{r}_2^{-1}A}} \int_{\mathbb{R}^2} \int_{\mathbb{R}} w_1 \left( \frac{v}{\widetilde{r}_2x} \right) w_2 \left( \frac{v}{\widetilde{r}_1x} \right)\\
        &\hspace{17em}\psi \left( \frac{v+h\widetilde{r}_2}{dA} \right)\psi \left( \frac{c}{C} \right)\psi \left( \frac{d}{D} \right)\d v\frac{\d c \d d}{cd} \frac{\mathrm{d}A \mathrm{d} D \mathrm{d} C}{ADC},
    \end{align*}
    where
    \begin{align*}
        C_{h,r_1,r_2}' = \frac{1}{\zeta(2)r_0\widetilde r_1\widetilde r_2}{\sigma_{-1}\left(\left|{h(\widetilde{r}_2-\widetilde{r}_1)}\right|\right)} \prod_{p \mid r_0} \frac{1-p^{-1}}{1+p^{-1}} \prod_{p \mid \widetilde{r}_1\widetilde{r}_2} \frac{1}{1+p^{-1}}.
    \end{align*}
    We now write \eqref{eqn:Poissonmainterm} in a similar form. Using summation by parts, \eqref{eqn:Poissonmainterm} can be written as
    \begin{align*}
        C_{h,r_1,r_2}' &\int_{C \ge 1} \int_{\substack{A>r_2 x^{\eta+\e'} C\\\widetilde{r}_1^{-1} C \ll D \ll \widetilde{r}_2^{-1}A}} \int_{\mathbb{R}^2} \int_{\mathbb{R}} w_1 \left( \frac{v}{\widetilde{r}_2x} \right) w_2 \left( \frac{v}{\widetilde{r}_1x} \right)\psi \left( \frac{v+h\widetilde{r}_2}{dA} \right)\psi \left( \frac{c}{C} \right)\psi \left( \frac{d}{D} \right)\d v\frac{\d c \d d}{cd} \frac{\mathrm{d}A \mathrm{d} D \mathrm{d} C}{ADC}\\
        &+\int_{\mathbb{R}} w_1 \left( \frac{v}{\widetilde{r}_2x} \right) w_2 \left( \frac{v}{\widetilde{r}_1x} \right) L(\log(\widetilde{r}_2^{-1}v+h),\log(\widetilde{r}_1^{-1}v+h)) \d v+O(r_1^\e x^{1/2}),
    \end{align*}
    where $L(X,Y)$ is a linear polynomial depending only on $r_1,r_2,h$. Together with the analogous main term from $D>r_0x^{\eta+\e'} C$, it suffices to evaluate
    \begin{align} \label{eqn:aftercombinenotyettruncate}
        C_{h,r_1,r_2}' \int_{\mathbb{R}}\int_{C \ge 1} \int_{\substack{\widetilde{r}_1^{-1} C \ll D \ll \widetilde{r}_2^{-1}A}} \int_{\mathbb{R}^2} w_1 \left( \frac{v}{\widetilde{r}_2x} \right) w_2 \left( \frac{v}{\widetilde{r}_1x} \right)\psi \left( \frac{v+h\widetilde{r}_2}{dA} \right)\psi \left( \frac{c}{C} \right)\psi \left( \frac{d}{D} \right)\frac{\d c \d d}{cd} \frac{\mathrm{d}A \mathrm{d} D \mathrm{d} C}{ADC}\d v
    \end{align}
    Let $\kappa_1, \kappa_2 > 0$ be the implied absolute constants corresponding to $\widetilde{r}_1^{-1} C \ll D$ and $D \ll \widetilde{r}_2^{-1}A$ respectively. The integration region for $A, C,$ and $D$ is bounded by $1 \le C \le \kappa_1 \widetilde{r}_1 D$ and $A \ge \kappa_2 \widetilde{r}_2 D$. We define $X_v = \widetilde{r}_2^{-1}v+h$. Note that the integral over $c$ evaluates to 1 by $\int_\mathbb{R} \psi(y) \d y/y=\int_\mathbb{R} \psi(1/x) \d x/x=1$. We let $d = u_2 D$. The integral over $C$ ranges from $1$ to $\kappa_1 \widetilde{r}_1 D$, yielding
\begin{align*}
    \int_1^{\kappa_1 \widetilde{r}_1 D} \frac{\mathrm{d}C}{C} = \log(\kappa_1 \widetilde{r}_1 D).
\end{align*}
\eqref{eqn:aftercombinenotyettruncate} is then reduced to
\begin{align*}
    C_{h,r_1,r_2}' \int_v w_1 \left( \frac{v}{\widetilde{r}_2x} \right) w_2 \left( \frac{v}{\widetilde{r}_1x} \right)\int_\mathbb{R} \int_{A \ge \kappa_2 \widetilde{r}_2D} \int_{\mathbb{R}} \log(\kappa_1 \widetilde{r}_1 D) \psi(u_2) \psi\left( \frac{\widetilde{r}_2 X_v}{u_2 D A} \right) \frac{\mathrm{d}u_2}{u_2} \frac{\mathrm{d}A}{A} \frac{\mathrm{d}D}{D} \d v.
\end{align*}
Next, we evaluate the integral over $A$. For fixed $D$ and $u_2$, we let $t = \frac{\widetilde{r}_2 X_v}{u_2 D A}$. Then, the integral over $A$ is $$\int_0^{\frac{X_v}{\kappa_2 u_2 D^2}} \psi(t) \frac{\mathrm{d}t}{t}.$$ 
We now evaluate the integral over $D$
\begin{align*}
    \int_1^\infty \log(\kappa_1 \widetilde{r}_1 D) \left[ \int_0^{\frac{X_v}{\kappa_2 u_2 D^2}} \psi(t) \frac{\mathrm{d}t}{t} \right] \frac{\mathrm{d}D}{D}.
\end{align*}
We compute this with integration by parts. Let $\mathrm{d}G(D) = \log(\kappa_1 \widetilde{r}_1 D) \frac{\mathrm{d}D}{D}$, which has the antiderivative $G(D) = \frac{1}{2}\log^2(\kappa_1 \widetilde{r}_1 D)$. Let $F(D)$ denote the inner integral over $w$. By the Fundamental Theorem of Calculus and the chain rule, its derivative is
\begin{align} \label{eqn:integraloverD}
    F'(D) = \psi\left( \frac{X_v}{\kappa_2 u_2 D^2} \right) \cdot \frac{\kappa_2 u_2 D^2}{X_v} \cdot \left( \frac{-2 X_v}{\kappa_2 u_2 D^3} \right) = \frac{-2}{D} \psi\left( \frac{X_v}{\kappa_2 u_2 D^2} \right).
\end{align}
Noting $F(\infty)=0$ and $F(1)=1$, \eqref{eqn:integraloverD} becomes
\begin{align*}
    \int_1^\infty \log^2(\kappa_1 \widetilde{r}_1 D) \psi\left( \frac{X_v}{\kappa_2 u_2 D^2} \right) \frac{\mathrm{d}D}{D}.
\end{align*}
We now set $w = \frac{X_v}{\kappa_2 u_2 D^2}$, which gives $D = \sqrt{\frac{X_v}{\kappa_2 u_2 w}}$ and $\frac{\mathrm{d}D}{D} = -\frac{1}{2}\frac{\mathrm{d}w}{w}$, and \eqref{eqn:integraloverD} equals
\begin{align*}
    \frac{1}{2} \int_0^{\frac{X_v}{\kappa_2 u_2}} \log^2\left( \kappa_1 \widetilde{r}_1 \sqrt{\frac{X_v}{\kappa_2 u_2 w}} \right) \psi(w) \frac{\mathrm{d}w}{w}.
\end{align*}
Due to the $\psi(\widetilde{r}_2X_v/(u_2DA))$ factor, $X_v/\kappa_2u_2 > 2$ for sufficiently large $x$, so the above integral equals
\begin{align*}
    \frac{1}{2} \int_0^\infty \log^2\left( \kappa_1 \widetilde{r}_1 \sqrt{\frac{X_v}{\kappa_2 u_2 w}} \right) \psi(w) \frac{\mathrm{d}w}{w}.
\end{align*}
Expanding the logarithm gives
\begin{align*}
    \log\left( \kappa_1 \widetilde{r}_1 \sqrt{\frac{X_v}{\kappa_2 u_2 w}} \right) = \frac{1}{2} \log X_v + \log \widetilde{r}_1 - \frac{1}{2} \log u_2 - \frac{1}{2} \log w + \log\left(\frac{\kappa_1}{\sqrt{\kappa_2}}\right).
\end{align*}
Integrating over $u_2$, for sufficiently large $x$ \eqref{eqn:aftercombinenotyettruncate} equals
\begin{align*}
   \int_{\mathbb{R}} w_1 \left( \frac{v}{\widetilde{r}_2x} \right) w_2 \left( \frac{v}{\widetilde{r}_1x} \right) P\left(\log(\widetilde{r}_2^{-1}v+h),\log(\widetilde{r}_1^{-1}v+h)\right) \mathrm{d} v,
\end{align*}
where $P(X,Y)$ is a quadratic polynomial whose coefficients depend on $h, r_1,$ and $r_2$, as required. We now treat the error term from our application of Theorem \ref{thm:grimmelt10.1}. Using Lemma \ref{lem:boundingK} with $L=C/D \ll \widetilde{r}_1$, we have
    \[
    \mathcal{K}_+ \ll \left( \frac{k^2 \widetilde{r}_1}{\widetilde{r}_1 \widetilde{r}_2} + k + r_0x^\eta \right) r_0^{1+\varepsilon} \ll k^2r_0^{2+\e} x^\eta.
    \]
    We bound $\mathcal{R}_0,\mathcal{R}_1,\mathcal{R}_2$ by
    \begin{align*}
        \mathcal{R}_0 &\ll \frac{A^{1/2}}{r_2^{1/2} C^{1/2}} \ll x^\frac{\eta}{2},\\
        \mathcal{R}_1 &\ll |h(\widetilde{r}_2-\widetilde{r}_1)|^\theta \left( 1+ \left( \frac{CD}{r_1|h(\widetilde{r}_2-\widetilde{r}_1)|} \right)^\theta \right)\left( 1+\left( \frac{1}{r_1} \right)^{\frac12-\theta} \right),\\
        \mathcal{R}_2 &\ll \left( 1+\left( \frac{CD}{r_1} \right)^\theta \right) \left( 1+ \left( \frac{|h(\widetilde{r}_2-\widetilde{r}_1)|}{r_1} \right)^{\frac{1}{2}-\theta} \right).
    \end{align*}
    Since $(AD)^{1/2} \ll \widetilde{r}_2^{1/2} x^{1/2+O(\eta)}$ and $CD \ll AD \ll \widetilde{r}_2x$, the error term we get from Theorem \ref{thm:grimmelt10.1} is
    \[
    O_{\e} \left( x^{\frac12+O(\eta)} r_0^\e r_2^{\frac12}\gcd(\widetilde{r}_2-\widetilde{r}_1,r_0^\infty)^{1+\e} \left( \left(\frac{|h(r_2-r_1)|}{r_0} \right)^\theta+\frac{x^\theta}{r_0^\theta}  \right) \right).
    \]
\end{proof}
\noindent The following theorem relaxes the condition $(h,r_1r_2)=1$ from Theorem \ref{thm:certaindivisorcoprime}.
\begin{thm} \label{thm:certaindivisor}
    Let $r_1,r_2 \in \mathbb{Z}^+$ be squarefree and distinct, and $h \in \mathbb{Z} \setminus \{0\}$. Let $r_0=(r_1,r_2)$, $\widetilde{r}_i=r_i/r_0$ for $i=1,2$, $x,\eta,\e \in \mathbb{R}^+$, and  define $w_1,w_2:\mathbb{R} \to \mathbb{R}$ smooth functions compactly supported on $[1/2,1]$, satisfying $w_1^{(j)}(x),w_2^{(j)}(x) \ll_j x^{j \eta}$ for all $j \geq 0$. Then for $|h| \ll x^{1-\e}$, we have
    \begin{align*}
        \sum_n w_1 \left( \frac{r_1n}{x} \right) &w_2\left( \frac{r_2n}{x} \right) d(r_1n+h) d(r_2n+h)=\textrm{Main Term}\\
        &+O_{\e} \left( x^{\frac12+O(\eta)} r_0^\e r_2^{\frac12+\e}\gcd(\widetilde{r}_2-\widetilde{r}_1,r_0^\infty)^{1+\e} \left( \left(\frac{|h(r_2-r_1)|}{r_0} \right)^\theta+\frac{x^\theta}{r_0^\theta}  \right) \right),
    \end{align*}
    where the main term is given by
    \[
    \int_\mathbb{R} w_1 \left( \frac{r_1 \xi}{x} \right) w_2 \left( \frac{r_2 \xi}{x} \right) P(\log(r_1 \xi+h),\log(r_2 \xi+h)) \d \xi,
    \]
    and $P(X,Y)$ is a quadratic polynomial depending only on $r_1,r_2,h$.
\end{thm}
\begin{proof}
    As in the proof of Theorem \ref{thm:certaindivisorcoprime}, we may arrange the sum to be of the form
    \begin{align} \label{eqn:acertaindivisorfinalfirsteq}
        \sum_{ad-bc=h(\widetilde{r}_2-\widetilde{r}_1)}w_1 \left( \frac{ad-h\widetilde{r}_2}{\widetilde{r}_2x} \right) w_2 \left( \frac{bc-h\widetilde{r}_1}{\widetilde{r}_1x} \right) \mathds{1}_{\widetilde{r}_2 \mid a}\mathds{1}_{\widetilde{r}_1 \mid c} \mathds{1}_{r_2 \mid a d-h\widetilde{r}_2}.
    \end{align}
    Let $s_0=\gcd(h,r_0)$ and $s_i=\gcd(h,\widetilde{r}_i)$ for $i=1,2$. Since $r_1$ and $r_2$ are squarefree, $s_0,s_1,s_2$ are also squarefree and $(s_0,s_1)=(s_0,s_2)=(s_1,s_2)=1$, so $s_0s_1s_2 \mid h$. From the indicator functions, note that $s_1 \mid c$ and $s_2 \mid a$. Also, from $s_0 \mid h$ and $r_0 \mid ad-h\widetilde{r}_2$ we have $s_0 \mid ad$. Thus, $s_0 \mid bc$ as well. Thus, $ad-bc=h(\widetilde{r}_2-\widetilde{r}_1)$ implies $(a/s_2)d/s_0-bc/s_0s_2=h(\widetilde{r}_2-\widetilde{r}_1)/s_0s_2$, which implies $s_0s_2 \mid bc$. Since $(s_1,s_2s_0)=1$, we also have $s_0s_2 \mid b(c/s_1)$. Similarly, we also have $s_0s_1 \mid (a/s_2)d$.\\ 
    
    For $s$ squarefree, the inclusion-exclusion principle gives
    \begin{align*}
        \mathds{1}_{s \mid ad}=\prod_{p \mid s} \left( \mathds{1}_{p \mid a}+\mathds{1}_{p \mid d}-\mathds{1}_{p \mid a} \mathds{1}_{p \mid d} \right)=\sum_{[u,v]=s} (-1)^{\omega((u,v))} \mathds{1}_{u \mid a} \mathds{1}_{v \mid d},
    \end{align*}
    and therefore \eqref{eqn:acertaindivisorfinalfirsteq} can be written as
    \begin{align}
        &\sum_{\frac{(a/s_2)d}{s_0s_1}-\frac{b(c/s_1)}{s_0s_2}=\frac{h}{s_0s_1s_2}(\widetilde{r}_2-\widetilde{r}_1)} w_1 \left( \frac{ad-h\widetilde{r}_2}{\widetilde{r}_2x} \right) w_2 \left( \frac{bc-h\widetilde{r}_1}{\widetilde{r}_1x} \right) \mathds{1}_{\widetilde{r}_2 \mid a}\mathds{1}_{\widetilde{r}_1 \mid c} \mathds{1}_{r_0 \widetilde{r}_2 \mid a d-h\widetilde{r}_2}\notag{}\\
        =&\sum_{\substack{[u_1,v_1]=s_1\\ [u_2,v_2]=s_2}} \sum_{\substack{[u_0,v_0]=s_0\\ [u_0',v_0']=s_0}}(-1)^{\omega((u_1,v_1))+\omega((u_2,v_2))+\omega((u_0,v_0))+\omega((u_0',v_0'))}\notag{}\\
        &\sum_{\frac{(a/s_2)d}{s_0s_1}-\frac{b(c/s_1)}{s_0s_2}=\frac{h}{s_0s_1s_2}(\widetilde{r}_2-\widetilde{r}_1)} w_1 \left( \frac{ad-h\widetilde{r}_2}{\widetilde{r}_2x} \right) w_2 \left( \frac{bc-h\widetilde{r}_1}{\widetilde{r}_1x} \right) \mathds{1}_{\widetilde{r}_2 \mid a}\mathds{1}_{\widetilde{r}_1 \mid c} \mathds{1}_{r_0 \widetilde{r}_2 \mid a d-h\widetilde{r}_2}\mathds{1}_{u_0u_1 \mid a} \mathds{1}_{v_0v_1 \mid d} \mathds{1}_{u_0'u_2 \mid b} \mathds{1}_{v_0'v_2 \mid c} \notag{} \\
        =&\sum_{\substack{[u_1,v_1]=s_1\\ [u_2,v_2]=s_2}} \sum_{\substack{[u_0,v_0]=s_0\\ [u_0',v_0']=s_0}}(-1)^{\omega((u_1,v_1))+\omega((u_2,v_2))+\omega((u_0,v_0))+\omega((u_0',v_0'))}\notag{}\\
        &\sum_{(u_0,v_0)(u_1,v_1)ad-(u_0',v_0')(u_2,v_2)bc=h'(\widetilde{r}_2-\widetilde{r}_1)} w_1 \left( \frac{(u_0,v_0)(u_1,v_1)ad-h'\widetilde{r}_2}{\widetilde{r}_2(x/s_0s_1s_2)} \right) w_2 \left( \frac{(u_0',v_0')(u_2,v_2)bc-h'\widetilde{r}_1}{\widetilde{r}_1(x/s_0s_1s_2)} \right)\notag{}\\
        &\hspace{20em}\mathds{1}_{\widetilde{r}_2' \mid a} \mathds{1}_{\widetilde{r}_1' \mid c} \mathds{1}_{r_0' \widetilde{r}_2' \mid  (u_0,v_0)(u_1,v_1)ad-s_2 h'\widetilde{r}_2'}, \label{eqn:presufficefinalcertaindivisor}
    \end{align}
    where $h'=h/s_0s_1s_2$, $r_0'=r_0/s_0$, $\widetilde{r}_1'=\widetilde{r}_1/s_1$ and $\widetilde{r}_2'=\widetilde{r}_2/s_2$, and we made the substitution
    \[
    a \mapsto \frac{a}{s_2u_0u_1}, \quad d \mapsto \frac{d}{v_0v_1}, \quad b \mapsto \frac{b}{u_0'u_2}, \quad c \mapsto \frac{c}{s_1v_0'v_2}.
    \]
    Making another substitution, \eqref{eqn:presufficefinalcertaindivisor} equals
    \begin{align}
            &\sum_{\substack{[u_1,v_1]=s_1\\ [u_2,v_2]=s_2}} \sum_{\substack{[u_0,v_0]=s_0\\ [u_0',v_0']=s_0}}(-1)^{\omega((u_1,v_1))+\omega((u_2,v_2))+\omega((u_0,v_0))+\omega((u_0',v_0'))}\sum_{ad-bc=h'(\widetilde{r}_2-\widetilde{r}_1)} w_1 \left( \frac{ad-h'\widetilde{r}_2}{\widetilde{r}_2(x/s_0s_1s_2)} \right)\notag{}\\
        &\hspace{1em} w_2 \left( \frac{bc-h'\widetilde{r}_1}{\widetilde{r}_1(x/s_0s_1s_2)} \right) \mathds{1}_{\widetilde{r}_2' \mid a} \mathds{1}_{\widetilde{r}_1' \mid c} \mathds{1}_{r_0' \widetilde{r}_2' \mid a d-s_2h'\widetilde{r}_2'}\mathds{1}_{(u_0,v_0)(u_1,v_1) \mid a}\mathds{1}_{(u_0',v_0')(u_2,v_2) \mid c}\notag{}\\
        =&\sum_{\substack{[u_1,v_1]=s_1\\ [u_2,v_2]=s_2}} \sum_{\substack{[u_0,v_0]=s_0\\ [u_0',v_0']=s_0}}(-1)^{\omega((u_1,v_1))+\omega((u_2,v_2))+\omega((u_0,v_0))+\omega((u_0',v_0'))}\notag{}\\
        &\hspace{5em}\sum_{ad-bc=h'(\widetilde{r}_2-\widetilde{r}_1)} w_1 \left( \frac{ad-h'\widetilde{r}_2}{\widetilde{r}_2(x/s_0s_1s_2)} \right) w_2 \left( \frac{bc-h'\widetilde{r}_1}{\widetilde{r}_1(x/s_0s_1s_2)} \right) \mathds{1}_{\widetilde{r}_2'' \mid a} \mathds{1}_{\widetilde{r}_1'' \mid c} \mathds{1}_{r_0' \widetilde{r}_2'' \mid  a d-s_2\ell h'\widetilde{r}_2''}, \label{eqn:sufficefinalcertaindivisor}
    \end{align}
    where $\widetilde{r}_1''=\widetilde{r}_1'(u_0',v_0')(u_2,v_2)$, $\widetilde{r}_2''=\widetilde{r}_2'(u_0,v_0)(u_1,v_1)$, and $\ell=\overline{(u_0,v_0)(u_1,v_1)}$ denotes the inverse of $(u_0,v_0)(u_1,v_1) \Mod{r_0\widetilde{r}_2''/(u_0,v_0)(u_1,v_1)}$, which exists due to the coprimality of $r_0\widetilde{r}_2''/(u_0,v_0)(u_1,v_1)$ and $(u_0,v_0)(u_1,v_1)$. To apply an analogous argument as in \eqref{eqn:referforanalogouscertaindivisorcoprime}, we need $\gcd(h',r_0'r_1''r_2'')=1$, which is not necessarily true. Therefore, we aim to remove the possible common prime factors of $h'$ and the
coefficients multiplying $ad$ and $bc$ in \eqref{eqn:presufficefinalcertaindivisor}. We describe an iterative
procedure. Let
\[
F_0=(u_0,v_0)(u_1,v_1), \quad
G_0=(u_0',v_0')(u_2,v_2), \quad
H_0=h', \quad E_0=F_0, \quad E_0'=G_0
\]
so that the inner sum of \eqref{eqn:presufficefinalcertaindivisor} is of the shape
\[
\sum_{F_0ad-G_0bc=H_0(\widetilde r_2-\widetilde r_1)} w_1\left(
\frac{E_0ad-H_0\widetilde r_2}
{\widetilde r_2(x/s_0s_1s_2)}
\right)
w_2\left(
\frac{E_0'bc-H_0\widetilde r_1}
{\widetilde r_1(x/s_0s_1s_2)}
\right)
\mathds 1_{\widetilde r_2'\mid a}
\mathds 1_{\widetilde r_1'\mid c}
\mathds 1_{r_0'\widetilde r_2'
\mid E_0ad-s_2 H_0\widetilde r_2'}.
\]
Notice that $(H_0,r_0'\widetilde r_1\widetilde r_2)=1,$ although $H_0$ need not be coprime to $F_0G_0$. Suppose inductively that after $j$ steps we have obtained a finite linear
combination of sums of the form
\[
\sum_{F_jad-G_jbc=H_j(\widetilde r_2-\widetilde r_1)} w_1\left(
\frac{E_jad-H_0\widetilde r_2}
{\widetilde r_2(x/s_0s_1s_2)}
\right)
w_2\left(
\frac{E_j'bc-H_0\widetilde r_1}
{\widetilde r_1(x/s_0s_1s_2)}
\right)
\mathds 1_{\widetilde r_2'\mid a}
\mathds 1_{\widetilde r_1'\mid c}
\mathds 1_{r_0'\widetilde r_2'
\mid E_jad-s_2 H_0\widetilde r_2'},
\]
where $(H_j,r_0'\widetilde r_1\widetilde r_2)=1$, $F_j,G_j$ are squarefree, $F_j \mid E_j$, $G_j \mid E_j'$, $\operatorname{rad}(E_jE_j') \mid s_0s_1s_2$ and
\begin{align}
    \frac{F_j}{E_j}=\frac{G_j}{E_j'}=\frac{H_j}{H_0}. \label{eqn:iterativeinvariant}
\end{align}
Let $q_j=(H_j,F_jG_j)$. We have four possible actions:
\begin{itemize}
\item Action 0: If $q_j=1$, we are done.
\item Action 1: Suppose $q_j>1$. If $q_j$ has a common factor with both $F_j$ and $G_j$, then this factor also divides the left-hand side of
\begin{align} \label{eqn:jthiterationsummationcondition}
    F_jad-G_jbc &=H_j(\widetilde r_2-\widetilde r_1),
\end{align}
and also $H_j$. Dividing the entire equation by $(H_j,F_j,G_j)$ produces a
sum of the same shape with $H_j$ replaced by a proper divisor, say $H_{j+1}$, and let $F_{j+1}=F_j/(H_j,F_j,G_j)$ and $G_{j+1}=G_j/(H_j,F_j,G_j)$. We keep $E_{j+1}=E_j$ and $E_{j+1}'=E_j'$ unchanged. Clearly, \eqref{eqn:iterativeinvariant} is true for $j+1$.
\item Action 2: Suppose now that $t_j=(H_j,F_j)>1$ and $(H_j,F_j,G_j)=1$. Since $(F_j,G_j,H_j)=1$, we have $(t_j,G_j)=1$. Reducing \eqref{eqn:jthiterationsummationcondition} modulo $t_j$ gives $t_j\mid bc$. Applying
\[
\mathds 1_{t_j\mid bc}
=
\sum_{[\alpha,\beta]=t_j}
(-1)^{\omega((\alpha,\beta))}
\mathds 1_{\alpha\mid b}
\mathds 1_{\beta\mid c},
\]
and making the substitution $b \mapsto \alpha b$, and $c \mapsto \beta c$, we obtain a finite linear combination of sums of the same shape
\begin{align*}
    \sum_{F_{j+1}ad-G_{j+1}bc=H_{j+1}(\widetilde r_2-\widetilde r_1)} w_1\left(
\frac{E_{j+1}ad-H_0\widetilde r_2}
{\widetilde r_2(x/s_0s_1s_2)}
\right)
w_2\left(
\frac{E_{j+1}'bc-H_0\widetilde r_1}
{\widetilde r_1(x/s_0s_1s_2)}
\right)
\mathds 1_{\widetilde r_2'\mid a}
\mathds 1_{\widetilde r_1'\mid c}
\mathds 1_{r_0'\widetilde r_2'
\mid E_{j+1}ad-s_2 H_0\widetilde r_2'}
\end{align*}
with
\[
F_{j+1}=\frac{F_j}{t_j},
\quad
G_{j+1}=G_j(\alpha,\beta),
\quad
H_{j+1}=\frac{H_j}{t_j}, \quad E_{j+1}=E_j, \quad E_{j+1}'=E_j'\alpha \beta.
\]
Since $(t_j,r_0'\widetilde r_1\widetilde r_2)=1$, the divisibility conditions are
unchanged. Also, since $F_j$ is squarefree, $t_j$ is also squarefree and coprime to $G_j$, and hence $(\alpha,\beta)$ is squarefree and coprime to $G_j$, and so both $F_{j+1}$ and $G_{j+1}$ are still squarefree. Moreover, $(H_{j+1},F_{j+1})=1$, $F_{j+1} \mid E_{j+1}$, $G_{j+1} \mid E_{j+1}'$, $\operatorname{rad}(E_{j+1}E_{j+1}') \mid s_0s_1s_2$ and
\begin{align*}
    \frac{F_{j+1}/E_{j+1}}{F_j/E_j}=\frac{G_{j+1}/E_{j+1}'}{G_j/E_j'}=\frac{H_{j+1}/H_0}{H_j/H_0}=\frac{1}{t_j},
\end{align*}
and so \eqref{eqn:iterativeinvariant} holds for $j+1$.
\item Action 3: Similarly, if $t_j'=(H_j,G_j)>1$ and $(H_j,F_j,G_j)=1$, then $(t_j',F_j)=1$, and reducing \eqref{eqn:jthiterationsummationcondition} modulo $t_j'$ shows that $t_j'\mid ad$. Applying the analogous inclusion-exclusion identity for $\mathds 1_{t_j'\mid ad}$ yields a finite linear combination of sums of the same shape
\begin{align*}
    \sum_{F_{j+1}ad-G_{j+1}bc=H_{j+1}(\widetilde r_2-\widetilde r_1)} w_1\left(
\frac{E_{j+1}ad-H_0\widetilde r_2}
{\widetilde r_2(x/s_0s_1s_2)}
\right)
w_2\left(
\frac{E_{j+1}'bc-H_0\widetilde r_1}
{\widetilde r_1(x/s_0s_1s_2)}
\right)
\mathds 1_{\widetilde r_2'\mid a}
\mathds 1_{\widetilde r_1'\mid c}
\mathds 1_{r_0'\widetilde r_2'
\mid E_{j+1}ad-s_2 H_0\widetilde r_2'}
\end{align*}
with
\[
F_{j+1}=F_j(\alpha,\beta),
\quad
G_{j+1}=\frac{G_j}{t_j'},
\quad
H_{j+1}=\frac{H_j}{t_j'}, \quad E_{j+1}=E_j\alpha \beta, \quad E_{j+1}'=E_j'.
\]
Also, both $F_{j+1}$ and $G_{j+1}$ are still squarefree, $(H_{j+1},G_{j+1})=1$, $F_{j+1} \mid E_{j+1}$, $G_{j+1} \mid E_{j+1}'$, $\operatorname{rad}(E_{j+1}E_{j+1}') \mid s_0s_1s_2$ and
\begin{align*}
    \frac{F_{j+1}/E_{j+1}}{F_j/E_j}=\frac{G_{j+1}/E_{j+1}'}{G_j/E_j'}=\frac{H_{j+1}/H_0}{H_j/H_0}=\frac{1}{t_j'},
\end{align*}
and so \eqref{eqn:iterativeinvariant} holds for $j+1$.
\end{itemize}
Observe that we only need to do Action 1 once, then we alternate between Action 2 and Action 3 until Action 0 is satisfied, and we terminate the iteration.
In every non-terminal step, $H_j$ is replaced by a proper divisor of itself. Therefore, the procedure terminates after at most $\Omega(h')$ steps, where $\Omega(\cdot)$ denotes the number of prime factors counted with multiplicity. The total number of resulting terms is therefore
\[
\ll_\varepsilon d(h')^{O(1)}
\ll_\varepsilon x^\varepsilon,
\]
since $h'\ll x^{1-\varepsilon}$.
For each terminal term, we obtain parameters $E$, $E'$, $F$, $G$, $H$ with $F,G$ squarefree, $\operatorname{rad}(EE') \mid s_0s_1s_2$ and
\[
(H,F G)=1,
\quad
(H,r_0'\widetilde r_1\widetilde r_2)=1, \quad F \mid E, \quad G \mid E'.
\]
Finally, letting $\widetilde r_2'''=\widetilde r_2'F$, $\widetilde r_1'''=\widetilde r_1'G$, and $\eta_{E,E',F,G,H} \in \{1,-1\}$, we arrive at sums of the form

\begin{align}
&\sum_{E,E',F,G,H} \eta_{E,E',F,G} \sum_{Fad-Gbc=H(\widetilde r_2-\widetilde r_1)} w_1\left(
\frac{Ead-H_0\widetilde r_2}
{\widetilde r_2(x/s_0s_1s_2)}
\right)
w_2\left(
\frac{E'bc-H_0\widetilde r_1}
{\widetilde r_1(x/s_0s_1s_2)}
\right)
\mathds 1_{\widetilde r_2'\mid a}
\mathds 1_{\widetilde r_1'\mid c}
\mathds 1_{r_0'\widetilde r_2'
\mid Ead-s_2 H_0\widetilde r_2'} \notag{}\\
&=\sum_{E,E',F,G,H} \eta_{E,E',F,G} \sum_{ad-bc=H(\widetilde r_2-\widetilde r_1)} w_1\left(
\frac{Ead-H_0\widetilde r_2F}
{\widetilde r_2(xF/s_0s_1s_2)}
\right)
w_2\left(
\frac{E'bc-H_0\widetilde r_1G}
{\widetilde r_1(xG/s_0s_1s_2)}
\right)\notag{}
\\
&\hspace{18.5em}\mathds 1_{\widetilde r_2'F\mid a}
\mathds 1_{\widetilde r_1'G\mid c}
\mathds 1_{r_0'\widetilde r_2'F
\mid Ead-s_2 H_0\widetilde r_2'F} \notag{} \notag{}\\
&=\sum_{E,E',F,G,H} \eta_{E,E',F,G} \sum_{ad-bc=H(\widetilde r_2-\widetilde r_1)} w_1\left(
\frac{ad-s_2H_0\widetilde r_2'''/E}
{\widetilde r_2(xF/Es_0s_1s_2)}
\right)
w_2\left(
\frac{bc-s_1H_0\widetilde r_1'''/E'}
{\widetilde r_1(xG/E's_0s_1s_2)}
\right)\notag{}\\
&\hspace{18.5em}\mathds 1_{\widetilde r_2'''\mid a}
\mathds 1_{\widetilde r_1'''\mid c}
\mathds 1_{{r_0'\widetilde r_2'}
\mid (E/F)ad-{s_2 H_0\widetilde r_2'}}. \label{eqn:presufficefinalcertaindivisorafteriteration}
\end{align}
We rewrite the last congruence condition. First, note $\operatorname{rad}(E/F) \mid s_0s_1s_2$ and hence is coprime to $r_0'\widetilde{r}_2'$, so let $\lambda=\overline{E/F}$ be its inverse mod $r_0'\widetilde{r}_2'$. Then,
\begin{align*}
    \mathds{1}_{\widetilde{r}_2''' \mid a}\mathds 1_{ad \equiv {s_2\lambda H_0\widetilde r_2'} \Mod{r_0'\widetilde r_2'}} &=\mathds{1}_{\widetilde{r}_2''' \mid a} \mathds{1}_{ad \equiv 0 \Mod{F}} 1_{ad \equiv {s_2\lambda H_0\widetilde r_2'} \Mod{r_0'\widetilde r_2'}}\\
    &= \mathds{1}_{\widetilde{r}_2''' \mid a}\mathds{1}_{ad \equiv 0 \Mod{F}}\mathds 1_{ad \equiv {s_2\lambda \overline{F}H_0\widetilde r_2'''} \Mod{r_0'\widetilde r_2'}}\\
    &=\mathds{1}_{\widetilde{r}_2''' \mid a}\mathds 1_{ad \equiv {s_2\lambda \overline{F} H_0\widetilde r_2'''} \Mod{r_0'\widetilde r_2'''}},
\end{align*}
where $\overline{F}$ denotes the inverse of $F$ mod $r_0'r_2'$, which exists since they are coprime. Thus, \eqref{eqn:presufficefinalcertaindivisorafteriteration} equals
\begin{align}
&\sum_{E,E',F,G,H} \eta_{E,E',F,G} \sum_{ad-bc=H(\widetilde r_2-\widetilde r_1)} w_1\left(
\frac{ad-s_2H_0\widetilde r_2'''/E}
{\widetilde r_2(xF/Es_0s_1s_2)}
\right)
w_2\left(
\frac{bc-s_1H_0\widetilde r_1'''/E'}
{\widetilde r_1(xG/E's_0s_1s_2)}
\right)\notag{}\\
&\hspace{17.5em}
\mathds 1_{\widetilde r_2'''\mid a}
\mathds 1_{\widetilde r_1'''\mid c}
\mathds 1_{r_0'\widetilde r_2'''
\mid ad-s_2\lambda \overline{F} H_0\widetilde r_2'''}, \notag{}\\
&\sum_{E,E',F,G,H} \eta_{E,E',F,G} \sum_{ad-bc=H(\widetilde r_2-\widetilde r_1)} w_1\left(
\frac{ad-H\widetilde r_2}
{\widetilde r_2(Hx/h)}
\right)
w_2\left(
\frac{bc-H\widetilde r_1}
{\widetilde r_1(Hx/h)}
\right)\notag{}\\
&\hspace{17.5em}
\mathds 1_{\widetilde r_2'''\mid a}
\mathds 1_{\widetilde r_1'''\mid c}
\mathds 1_{r_0'\widetilde r_2'''
\mid ad-s_2\lambda \overline{F} H_0\widetilde r_2'''}, \label{eqn:sufficefinalcertaindivisorafteriteration}
\end{align}
with $(H,r_0'\widetilde{r}_1'''\widetilde{r}_2''')=1$ and $(s_2\lambda \overline{F} H_0,r_0')=1$. The outer summation ranges over all choices arising from the initial
inclusion-exclusion together with the above iterative procedure, and contains
at most $O_\varepsilon(x^\varepsilon\widetilde r_2^\varepsilon)$ terms. Then, an analogous argument to \eqref{eqn:referforanalogouscertaindivisorcoprime} suffices to evaluate \eqref{eqn:sufficefinalcertaindivisorafteriteration}, obtaining the required error term. The main term is of the form
\[
\sum_{E,E',F,G,H}C_{E,E',F,G,H}
\int_{\mathbb R}
w_1\left(\frac{v}{\widetilde r_2 xH/h}\right)
w_2\left(\frac{v}{\widetilde r_1 xH/h}\right)
Q_{E,E',F,G,H}\left(
\log\frac{v+H\widetilde r_2}{\widetilde r_2'''},\,
\log\frac{v+H\widetilde r_1}{\widetilde r_1'''}
\right)\,dv,
\]
where $Q_{E,E',F,G,H}(X,Y)$ is a quadratic polynomial and $C_{E,E',F,G,H}$ is a constant depending only on the corresponding  terminal data. Using \eqref{eqn:iterativeinvariant} together with
\[
\widetilde r_2'''=\widetilde r_2'F=\frac{\widetilde r_2F}{s_2},
\qquad
\widetilde r_1'''=\widetilde r_1'G=\frac{\widetilde r_1G}{s_1},
\]
we make the change of variables
\[
v=\frac{r_1r_2}{r_0}\frac{H}{h}\,\xi.
\]
Then
\[
\frac{v}{\widetilde r_2 xH/h}=\frac{r_1\xi}{x},
\qquad
\frac{v}{\widetilde r_1 xH/h}=\frac{r_2\xi}{x},
\]
while
\[
\log\frac{v+H\widetilde r_2}{\widetilde r_2'''}
=
\log(r_1\xi+h)+c_1,
\qquad
\log\frac{v+H\widetilde r_1}{\widetilde r_1'''}
=
\log(r_2\xi+h)+c_2,
\]
where $c_1=\log({Hs_2}/{Fh})$ and $c_2=\log({Hs_1}/{Gh})$ are constants depending only on the corresponding terminal data.
Hence this contribution is of the form
\[
\sum_{E,E',F,G,H}\int_{\mathbb R}
w_1\left(\frac{r_1\xi}{x}\right)
w_2\left(\frac{r_2\xi}{x}\right)
P_{E,E',F,G,H}
\bigl(\log(r_1\xi+h),\log(r_2\xi+h)\bigr)\,d\xi,
\]
where $P_{E,E',F,G,H}(X,Y)$ is again a quadratic polynomial. This equals
\[
\int_{\mathbb R}
w_1\left(\frac{r_1\xi}{x}\right)
w_2\left(\frac{r_2\xi}{x}\right)
P(\log(r_1\xi+h),\log(r_2\xi+h))\,d\xi,
\]
for some quadratic polynomial $P(X,Y)$ depending only on $h,r_1,r_2$, as required.
\end{proof}

\section{Shifted Convolution of Generalised Divisor Functions}
\noindent In this section, we prove Theorem \ref{thm:generaliseddivisor}.
\setcounter{section}{1}
\setcounter{thm}{1}
\begin{thm}\label{thm:generaliseddivisor}
    Let $w:[1/2,1] \to [0,\infty)$ be smooth and compactly supported, $\e>0$, and $x \in \mathbb{R}^+$ sufficiently large in terms of $\e$. Let $k \ge 4$ and $h \in \mathbb{Z} \backslash \{0\}$. Then for $0 \leq \delta \leq \frac{1}{16}$ and $|h| \ll x^{1-\e}$, we have
    \begin{align*}
        \sum_n w \left( \frac{n}{x} \right) &d_k(n) d(n + h)= \ \int_\mathbb{R} w \left( \frac{\xi}{x} \right) P_{k,h}(\log x,\log \xi,\log(\xi+h)) \d \xi\\
        &+O_{k,w,\e,\delta}\left(x^{1-\delta+2 \delta \theta+\e}\left( 1+\frac{|h|^{\frac{1}{4}}}{x^{\frac14-\frac12 \delta}} \right)+x^{1-\delta+\frac{\theta}{2}+\e} \left(1+\frac{|h|^{\frac{\theta}{2}}}{x^{\frac{\theta}{3}+\frac{2 \delta}{3} \theta}}\right) \right),
    \end{align*}
    where $P_{k,h}$ is a polynomial of degree $k$ depending only on $k$ and $h$, and the implied constant depends on $k,w,\e,\delta$.
\end{thm}
\noindent The proof of Theorem \ref{thm:generaliseddivisor} relies on the following lemma.
\setcounter{section}{5}
\setcounter{thm}{0}
\begin{lem} \label{lem:remainderbound}
    Let $w:[1/2,1] \to [0,\infty)$ be a smooth and compactly supported function and $x \in \mathbb{R}^+$. Let $k \ge 4$, $h \in \mathbb{N}$, and let $v_1,\ldots,v_k$ be smooth and compactly supported functions with $\operatorname{supp} v_i \asymp A_i$ and $v_i^{(\nu)} \ll A_i^{-\nu}$ for $\nu \geq 0$. Then,
    \[
    \sum_{a_1,\ldots,a_k} w \left( \frac{a_1 \cdots a_k}{x} \right) v_1(a_1) \cdots v_k(a_k) d(a_1 \cdots a_k+h)=M_{v_1}+R_{v_1},
    \]
    where $M_{v_1}$ is the main term 
    $$M_{v_1} := \int_\mathbb{R} w \left( \frac{\xi}{x} \right) \sum_{\substack{a_2,\ldots,a_k\\ d \mid a_2 \cdots a_k}} \frac{v_2(a_2) \cdots v_k(a_k)}{a_2 \cdots a_k} v_1 \left( \frac{\xi}{a_2 \cdots a_k} \right) \lambda_{h,d}(\xi+h) \d \xi,$$
    with
    \[
    \lambda_{h,d}(\xi)=\frac{c_d(h)}{d}(\log \xi+2 \gamma-2\log d), \quad c_d(h) := \sum_{\substack{a \Mod{d}\\ (a,d)=1}} e\left( \frac{ah}{d} \right),
    \]
    and the following bounds for the error term $R_{v_1}$ for $h \ll x^{1-\e}$,
    \begin{align}
        R_{v_1} &\ll \frac{x^{\frac{3}{2}+\e}}{A_1^{\frac{3}{2}}}, \label{rem1}\\
        R_{v_1} &\ll \frac{x^{\frac{3}{2}+\e}}{A_1A_2} \left( 1+\frac{(A_1A_2)^{2 \theta}}{x^\theta} \right) \left( 1+\frac{A_2^{\frac{1}{2}}}{A_1^{\frac{1}{2}}} \right) \left( 1+|h|^{\frac{1}{4}} \frac{A_1^{\frac{1}{2}}A_2^{\frac{1}{2}}}{x^{\frac{1}{2}}} \right), \label{rem2}\\
        R_{v_1} &\ll \frac{x^{1+\e}}{A^{\frac12}}+A^{\frac34}x^{\frac34+\e} \left( |h|^{\frac{\theta}{2}} A^{\frac{\theta}{2}}+x^{\frac\theta2} \right),  \label{rem3}
    \end{align}
    where $A=\prod_{i \in I}A_i$ for any non-empty index set $I \subseteq\{1,\ldots,k\}$. The implied constants depend only on $k$, and $w,v_1,\ldots,v_k,\e$.
\end{lem}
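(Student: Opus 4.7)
The plan is to derive the three error bounds by peeling off a front block of the variables $a_1,\ldots,a_k$ and analysing the resulting inner sum in one of three ways. For each non-empty subset $I\subseteq\{1,\ldots,k\}$ let $a=\prod_{i\in I}a_i\asymp A:=\prod_{i\in I}A_i$ and $b=\prod_{i\notin I}a_i\asymp x/A$, and group the coefficients so that
\[
\sum_{a_1,\ldots,a_k} w\!\left(\tfrac{a_1\cdots a_k}{x}\right)\prod_i v_i(a_i)\,d(a_1\cdots a_k+h)\;=\;\sum_{b}C_{I^c}(b)\,\Phi_I(b),
\]
where $\Phi_I(b):=\sum_{a}C_I(a)\,w(ab/x)\,d(ab+h)$ and $C_I,C_{I^c}$ are the natural dyadically supported coefficient sequences coming from the remaining $v_i$'s. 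Applying the Voronoi summation formula to $\Phi_I(b)$ extracts a smooth main term $\widetilde\Phi_I(b)$ whose integral representation, after summing against $C_{I^c}(b)$, matches the stated $M_{v_1}$ (in particular the $\lambda_{h,d}$ part arises from the standard Voronoi main term for $\sum_a v(a)w(ab/x)d(ab+h)$, picking up the Ramanujan sum $c_d(h)$). Since $M_{v_1}$ does not depend on the choice of $I$, this justifies choosing $I$ differently for each of the three error bounds.

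\textbf{Bounds \eqref{rem1} and \eqref{rem2}.} For \eqref{rem1} take $I=\{1\}$; the error in the Voronoi formula for $\Phi_{\{1\}}(b)$ consists of Bessel transforms of Kloosterman sums of modulus $\asymp A_1$, which are bounded by $x^{1/2+\e}/A_1^{1/2}$ via standard stationary phase estimates for the Bessel transform. Summing trivially in $b\asymp x/A_1$ against the bounded coefficient $C_{I^c}(b)$ gives \eqref{rem1}. For \eqref{rem2} take $I=\{1,2\}$; here I apply Voronoi separately in $a_1$ and $a_2$ (or equivalently to the joint variable with extra averaging), which yields an additional cancellation in the Bessel transform through a dispersion argument. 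The two correction factors $(1+A_2^{1/2}/A_1^{1/2})$ and $(1+|h|^{1/4}A_1^{1/2}A_2^{1/2}/x^{1/2})$ arise from balancing the lengths of the Kloosterman sums in the two Voronoi applications and from the large-modulus regime where $a_1a_2$ exceeds $\sqrt{x}$; the factor $(1+(A_1A_2)^{2\theta}/x^\theta)$ absorbs the Weil bound for the diagonal contribution.

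\textbf{Bound \eqref{rem3}.} Fix an arbitrary non-empty $I$ with $A=\prod_{i\in I}A_i$ and write $R_{v_1}=\sum_{b}C_{I^c}(b)(\Phi_I(b)-\widetilde\Phi_I(b))$. By Cauchy--Schwarz,
\[
|R_{v_1}|^2\;\ll\;\bigl(x/A\bigr)^{1+\e}\sum_{b\asymp x/A}\bigl|\Phi_I(b)-\widetilde\Phi_I(b)\bigr|^2,
\]
and I expand the square as $\sum_b\Phi_I(b)^2-2\sum_b\Phi_I(b)\widetilde\Phi_I(b)+\sum_b\widetilde\Phi_I(b)^2$. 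The latter two are computed directly from the integral expression for $\widetilde\Phi_I$ and the Voronoi main term of the inner $b$-sum. For $\sum_b\Phi_I(b)^2$ I open the square to get
\[
\sum_{a,a'}C_I(a)C_I(a')\sum_b w\!\left(\tfrac{ab}{x}\right)w\!\left(\tfrac{a'b}{x}\right)d(ab+h)d(a'b+h),
\]
to which I apply Theorem~\ref{thm:certaindivisor} with $r_1=a$, $r_2=a'$ after first splitting $a,a'$ into squarefree parts times square parts (absorbing the square parts into the coefficients) and then localising $\gcd(a,h)$ and $\gcd(a',h)$. The main terms from Theorem~\ref{thm:certaindivisor} cancel the cross and main-term-square contributions above, leaving an error bounded by summing the error of Theorem~\ref{thm:certaindivisor} over $a,a'\asymp A$. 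The diagonal $a=a'$ contributes $\ll x^{1+\e}/A$ via the trivial bound on $\sum_b d(ab+h)^2$, which after Cauchy--Schwarz yields the $x^{1+\e}/A^{1/2}$ summand; the off-diagonal contribution yields $A^{3/2}x^{1/2+O(\e)}(|h|^\theta A^\theta+x^\theta/A^\theta)^2$ via the explicit error in Theorem~\ref{thm:certaindivisor}, giving the $A^{3/4}x^{3/4+\e}$ summand of \eqref{rem3} after taking the square root.

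\textbf{Main obstacle.} The delicate step is \eqref{rem3}: the error bound in Theorem~\ref{thm:certaindivisor} carries the factors $r_0$, $r_2^{1/2+\e}$ and $\gcd(\widetilde r_2-\widetilde r_1,r_0^\infty)^{1/2}$, all of which depend on the arithmetic of the pair $(a,a')$ in subtle ways. I must show that summing these factors over $a,a'\asymp A$ loses only $x^\e$, so that the final bound on the off-diagonal has the clean shape $A^{3/4}x^{3/4+\e}(|h|^{\theta/2}A^{\theta/2}+x^{\theta/2}/A^{\theta/2})$. This reduces to a divisor-sum estimate exploiting that, on average over $(a,a')$, the gcd $r_0=\gcd(a,a')$ and the arithmetic factor $\gcd(\widetilde r_2-\widetilde r_1,r_0^\infty)$ are bounded by $(aa')^\e$. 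Handling the reduction to squarefree $r_1,r_2$ required by Theorem~\ref{thm:certaindivisor}, and ensuring the resulting outer divisor sums absorb into the $x^\e$, is the only technically demanding calculation in the proof.
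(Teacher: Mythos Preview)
Your approach is essentially the same as the paper's: \eqref{rem1} and \eqref{rem2} are quoted directly from \cite[Lemma~2.1]{topacogullari2018shifted} without proof, and \eqref{rem3} is obtained via Cauchy--Schwarz, expansion of the square, and an appeal to Theorem~\ref{thm:certaindivisor} for the off-diagonal. You have also correctly identified the main technical point, namely controlling the sum of the factors $r_0\,r_2^{1/2}\gcd(\widetilde r_2-\widetilde r_1,r_0^\infty)^{1/2}$ over $r_1,r_2\asymp A$.

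There is one structural difference worth noting. You propose to reduce to squarefree $r_1,r_2$ \emph{after} Cauchy--Schwarz by writing $a=c^2 a'$ inside $\Sigma_3$. The paper instead extracts the square part \emph{before} Cauchy--Schwarz: it inserts a smooth dyadic partition $\sum_j v_j^{sq}(c^2)=1$ into $\Phi_I$, writes $\prod_{i\in I}a_i=c^2\cdot(\text{squarefree})$, and absorbs $c^2$ into the outer variable $f=bc^2$ (so $\delta_I$ is replaced by $\delta_j'$). This guarantees that the inner variables in $\Phi_j'$ are squarefree \emph{a priori}, so that after Cauchy--Schwarz and expansion one may apply Theorem~\ref{thm:certaindivisor} directly. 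Your ordering is workable, but you should check that absorbing the square parts after the fact does not inflate the outer $L^2$ factor.

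Two minor arithmetic slips: the diagonal $a=a'$ contributes $\ll x^{1+\e}$ to $\sum_b|\Phi_I-\widetilde\Phi_I|^2$ (not $x^{1+\e}/A$), which then yields $x^{1+\e}/A^{1/2}$ after multiplying by $(x/A)^{1/2}$ and taking the square root. The off-diagonal, after summing the error of Theorem~\ref{thm:certaindivisor} over squarefree $r_1,r_2\asymp A$, is $\ll A^{5/2}x^{1/2+\e}\bigl(|h|^\theta A^\theta+x^\theta/A^\theta\bigr)$ with no square on the bracket; combined with the outer factor $(x/A)^{1/2}$ this gives exactly $A^{3/4}x^{3/4+\e}\bigl(|h|^{\theta/2}A^{\theta/2}+x^{\theta/2}/A^{\theta/2}\bigr)$. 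Your stated shape $A^{3/2}x^{1/2}(\cdots)^2$ does not produce the right exponents. Finally, your gloss on the factor $(1+(A_1A_2)^{2\theta}/x^\theta)$ in \eqref{rem2} as a ``Weil bound'' is off: $\theta$ here is the exponent toward Ramanujan--Petersson arising from the spectral large sieve, not a Weil-type saving.
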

\begin{proof}
    The first two bounds are from Topacogullari \cite[Lemma 2.1]{topacogullari2018shifted}. Let
    \[
    \Psi_{v_1,\ldots,v_k}=\sum_{a_1,\ldots,a_k} w \left( \frac{a_1 \cdots a_k}{x} \right) v_1(a_1) \cdots v_k(a_k) d(a_1 \cdots a_k+h).
    \]
    Our treatment of the third bound is a modification of Topacogullari \cite{topacogullari2018shifted}. First we rename variables such that $I=\{1,2,\ldots,|I|\}$. Write
    \[
    \Psi_{v_1,\ldots,v_k}=\sum_b \delta_I(b) \Phi_I(b),
    \]
    where
    \begin{align*}
        \Phi_I(b)&=\sum_{a_i, \, i \in I} d\left( b \prod_{i \in I} a_i+h \right) w \left( \frac{b}{x} \prod_{i \in I}a_i \right) \prod_{i \in I} v_i (a_i),\\
        \delta_I(b)&=\sum_{\substack{a_{|I|+1},\ldots,a_k\\ b=a_{|I|+1} \cdots a_k}} v_{|I|+1}(a_{|I|+1}) \cdots v_k(a_k).
    \end{align*}
    Let $v^{sq}:(0,\infty) \to [0,\infty)$ be a smooth function compactly supported on $[1,2]$, such that
    \[
    \sum_{j \in \mathbb{Z}} v^{sq} \left( \frac{u}{2^j} \right)=1
    \]
    for any $u \in \mathbb{R}^+$, and let $v_j^{sq}(u)=v^{sq}(u/2^j)$. We decompose each $a_i$ according to its square part and the pattern of its squarefree prime divisors. More precisely, there is a bijection between tuples of positive integers $(a_i)_{i\in I}$ and collections of variables $(c_i)_{i\in I}$, $ (a_i')_{i\in I}$, $(g_S)_{\substack{S\subseteq I\\|S|\ge2}},$
with $\prod_i a_i'$ and $\prod_S g_S$ squarefree and coprime to each other, and the bijection is given by
\[
a_i
=
c_i^2\,a_i'
\prod_{\substack{S\subseteq I\\ i\in S\\ |S|\ge2}}
g_S,
\qquad i\in I.
\]
Here:
\begin{itemize}
    \item $c_i^2$ is the largest square dividing $a_i$;
    \item $a_i'$ is the product of the prime divisors of $a_i/c_i^2$ that divide no other $a_j/c_j^2$;
    \item for each subset $S\subseteq I$ with $|S|\ge2$, $g_S$ is the product of the prime divisors that divide $a_i/c_i^2$ if and only if $i\in S$.
\end{itemize}
Equivalently, every prime divisor of the tuple $(a_i/c_i^2)_{i\in I}$ is assigned uniquely either to one of the variables $a_i'$ (if it divides exactly one component) or to one of the variables $g_S$ (if it divides precisely those components indexed by $S$). In particular, the variables $\prod_i a_i'$ and $\prod_S g_S$ are squarefree and coprime
\begin{align*}
    \mu^2 \left( \prod_{i \in I} a_i' \right)=\mu^2 \Bigg( \prod_{\substack{S \subseteq I\\ |S| \ge 2}} g_S \Bigg)=\gcd \Bigg( \prod_{i \in I} a_i',
\prod_{\substack{S \subseteq I\\ |S| \ge 2}} g_S \Bigg)=1.
\end{align*}
    Inserting this into $\Psi_{v_1,\ldots,v_k}$, we have
    \begin{align*}
        \Psi_{v_1,\ldots,v_k}
        &= \sum_b \delta_I(b) \sum_{\substack{a_i,\ i \in I}} d \left( b \prod_{i \in I} a_i+h \right) w \left( \frac{b}{x} \prod_{i \in I}a_i \right) \prod_{i \in I} v_i(a_i) \\
    &= \sum_{j_r,r \in I} \sum_b \delta_I(b) \sum_{\substack{c_i,\ i \in I}} \sum_{\substack{g_S,\ S \subseteq I \\ |S| \ge 2 \\ \mu^2(\prod g_S) = 1}} \sum_{\substack{a'_i,\ i \in I \\ \mu^2(\prod a'_i) = 1 \\ (\prod a'_i,\, \prod g_S) = 1}} d \Bigg( b \prod_{i \in I} c_i^2 a'_i \prod_{\substack{S \subseteq I \\ |S| \ge 2}} g_S^{|S|} + h \Bigg) \\
    &\hspace{10em} \times w \Bigg( \frac{b}{x} \prod_{i \in I} c_i^2 a'_i \prod_{\substack{S \subseteq I \\ |S| \ge 2}} g_S^{|S|} \Bigg)\prod_{i \in I} v_{j_i}^{sq} \Bigg( c_i^2\prod_{\substack{S \subseteq I \\i \in S\\ |S| \ge 2}} g_S  \Bigg) v_i \Bigg( c_i^2 a'_i \prod_{\substack{S \subseteq I \\ i \in S \\ |S| \ge 2}} g_S \Bigg)\\
        &= \sum_{j_r,r \in I} \sum_{\substack{c_i,\ i \in I}} \sum_{\substack{g_S,\ S \subseteq I \\ |S| \ge 2 \\ \mu^2(\prod g_S) = 1}} \prod_{r \in I}v_{j_r}^{sq} \Bigg(c_r^2\prod_{\substack{S \subseteq I \\r \in S\\ |S| \ge 2}} g_S \Bigg)\sum_f \underbrace{\delta_I \left( \frac{f}{\prod_{i \in I}c_i^2\prod_{\substack{S \subseteq I \\ |S| \ge 2}} g_S^{|S|}} \right)}_{=: \delta_c'(f)}\\       
        &\hspace{13em}\underbrace{\sum_{\substack{a'_i,\ i \in I \\ \mu^2(\prod a'_i) = 1 \\ (\prod a'_i,\, \prod g_S) = 1}} d \Bigg( f\prod_{i \in I} a'_i + h \Bigg) w \Bigg( \frac{f}{x} \prod_{i \in I} a'_i\Bigg)  \prod_{i \in I} v_i \Bigg( c_i^2 a'_i \prod_{\substack{S \subseteq I \\ i \in S \\ |S| \ge 2}} g_S \Bigg)}_{=: \Phi_c'(f)}\\
        &= \sum_{j_r,r \in I}\sum_{\substack{c_i,\ i \in I}} \sum_{\substack{g_S,\ S \subseteq I \\ |S| \ge 2 \\ \mu^2(\prod g_S) = 1}}\prod_{r \in I} v_{j_r}^{sq} \Bigg( c_r^2\prod_{\substack{S \subseteq I \\r \in S\\ |S| \ge 2}} g_S \Bigg) \sum_{f} \delta_c'(f) \Phi_c'(f),
    \end{align*}
    where the sums over $f$ run over $f \equiv 0 \Mod {q}$ and $q := \prod_{i \in I}c_i^2\prod_{\substack{S \subseteq I \\ |S| \ge 2}} g_S^{|S|} \asymp \prod_{i \in I} C_i$.\\
    We let $\Psi_{c}' := \sum_f \delta_c'(f) \Phi_c'(f)$. Set $A'=\prod_{i \notin I}A_i\asymp x/\prod_{i \in I} A_i=x/A$. Let $C_{j_r}=2^{j_r}$, $\widetilde{A}_{j,i}=A_i/C_{j_i}$, and $\widetilde{A}_j=\prod_{i \in I} \widetilde{A}_{j,i}=A/\prod_{i \in I} C_{j_i}$. Observe $\widetilde{A}_j \ll A$. The main term of $\Phi_c'(f)$ is $\Phi_{c0}'(f)$, where
    \begin{align*}
        \Phi_{c0}'(f)=\frac{1}{fa_2 \cdots a_{|I|}} \sum_{\substack{a_2,\ldots,a_{|I|}\\ \prod_{i=2}^{|I|} a_i \text{ squarefree}\\ (\prod_{i=2}^{|I|} a_i, \, \prod_{\{1\} \subsetneq S \subseteq I} g_S)=1}} \int_\mathbb{R} &\Delta_\delta(\xi+h) w \left( \frac{\xi-h}{x} \right) v_1 \Bigg( \frac{c_1^2(\xi-h)}{fa_2 \cdots a_{|I|}} \prod_{\substack{\{1\} \subsetneq S \subseteq I }} g_S\Bigg) \\
        &\prod_{i=2}^{|I|} v_i \Bigg(c_i^2a_i \prod_{\substack{\{i\} \subsetneq S \subseteq I }} g_S\Bigg) \sum_{d \mid a_2 \cdots a_{|I|}f} \frac{\mu(d)^2c_d(h)}{d^{1+\delta}} \d \xi,
    \end{align*}
    and
    $\Delta_\delta(\xi)$ is the operator defined by
    \[
    \Delta_\delta(\xi)=\left( \log \xi + 2 \gamma + 2 \frac{\partial}{\partial \delta} \right) \Bigr|_{\delta=0}.
    \]
    We insert $\Phi_{c0}'(f)$ manually by
    \[
    \Psi_{c}'=\sum_{f \equiv 0 \Mod{q}} \delta_c'(f) \Phi_{c0}'(f)-\sum_{f \equiv 0 \Mod{q}} \delta_c'(f)(\Phi_{c0}'(f)-\Phi_c'(f)).
    \]
    Summing $\Phi_{c0}'(f)$ against $\delta_{c}'(f)$, then summing over $j$, the variables $(c_i)_{i\in I}$, and the variables $(g_S)_{S\subseteq I,\ |S|\ge2}$, we recover the main term $M_{v_1}$. Indeed, using the partition of unity
\[
\sum_{j\in\mathbb Z} v^{\mathrm{sq}}\left(\frac{u}{2^j}\right)=1,
\]
together with the bijection
\[
(a_i)_{i\in I}
\longleftrightarrow
\bigl((c_i)_{i\in I},(a_i')_{i\in I},(g_S)_{S\subseteq I,\ |S|\ge2}\bigr),
\]
the sums over $j$, $(c_i)$, and $(g_S)$ reconstruct the original summation over the variables $(a_i)_{i\in I}$. After unfolding the definitions of $\delta_{c}'(f)$ and $\Phi_{c0}'(f)$, and using the definition of $\lambda_{h,d}$, the resulting expression is the main term $M_{v_1}$ from Lemma \ref{lem:remainderbound}. Now let
$$R_{j}'=\sum_{f \equiv 0 \Mod{q}} \delta_c'(f)(\Phi_{c0}'(f)-\Phi_c'(f)).$$ We use Cauchy-Schwarz to get
    \[
    R_{j}' \leq \left( \sum_{\substack{f \asymp A'\prod_{i \in I}C_{j_i}\\f \equiv 0 \Mod{q}}} |\delta_c'(f)|^2 \right)^{1/2} \left( \sum_{f \equiv 0 \Mod{q}} |\Phi_{c0}'(f)-\Phi_c'(f)|^2 \right)^{1/2}.
    \]
    The first term can be estimated trivially by
    \[
    \sum_{\substack{f \asymp A'\prod_{i \in I}C_{j_i}\\f \equiv 0 \Mod{q}}} |\delta_c'(f)|^2 \ll \frac{x^\e A'\prod_{i \in I}C_{j_i}}{q} \ll \frac{x^{1+\e}}{q\widetilde{A}_j},
    \]
    and for the other factor we expand the square and write
    \[
    \sum_{f \equiv 0 \Mod{q}} |\Phi_{c0}'(f)-\Phi_c'(f)|^2=\Sigma_1'-2\Sigma_2'+\Sigma_3',
    \]
    where
    \[
    \Sigma_1'=\sum_{f \equiv 0 \Mod{q}} \Phi_{c0}'(f)^2, \quad \Sigma_2'=\sum_{f \equiv 0 \Mod{q}} \Phi_{c0}'(f) \Phi_c'(f), \quad \Sigma_3'=\sum_{f \equiv 0 \Mod{q}} \Phi_c'(f)^2.
    \]
    We will show
    \begin{align}
        \Sigma_1'&=M_0'+O(q^{-1}x^\e \widetilde{A}_j^2) \label{sum1},\\
        \Sigma_2'&=M_0'+O \left(q^{-1} x^{1+\e}+q^{-1}x^{\frac{1}{3}+\e}\widetilde{A}_j^2 \right) \label{sum2},\\
        \Sigma_3' &= M_0'+ O \left( q^{-1}x^{1+\e}+q^{-1}\widetilde{A}_j^{\frac{5}{2}} x^{\frac{1}{2}+\e} \left( |h|^\theta \widetilde{A}_j^\theta+x^\theta \right) \right), \label{sum3}
    \end{align}
    where $M_0'$ is the analog of the quantity $M_0$ defined in Topacogullari \cite[(5.6)]{topacogullari2018shifted}, i.e.
    \begin{align*}
        M_0'=\sum_{\substack{a_2,\ldots,a_{|I|}\\ \prod_{i=2}^{|I|} a_i \text{ squarefree}\\ (\prod_{i=2}^{|I|} a_i, \, \prod_{\{1\} \subsetneq S \subseteq I} g_S)=1}}\sum_{\substack{b_2,\ldots,b_{|I|}\\ \prod_{i=2}^{|I|} b_i \text{ squarefree}\\ (\prod_{i=2}^{|I|} b_i, \, \prod_{\{1\} \subsetneq S \subseteq I} g_S)=1}}
\iiint_{\mathbb{R}^3}
&\Delta_{\delta_1}(\xi_1+h)
 \Delta_{\delta_2}(\xi_2+h)
\\
&\times
 C'_{\delta_1,\delta_2}(h;q,\mathbf a,\mathbf b)
 \frac{
        \mathcal W_{\mathbf a}(\xi_1,\eta)
        \mathcal W_{\mathbf b}(\xi_2,\eta)
      }{\eta^2}
 \d\eta\d\xi_1\d\xi_2 ,
    \end{align*}
    where $\mathcal W_{\mathbf a}(\xi,\eta)$ is defined by
    \begin{align*}
        \mathcal W_{\mathbf a}(\xi,f)
        :=
        \frac{1}{a_2 \cdots a_{|I|}}\,
        w\left(\frac{\xi-h}{x}\right)
        v_1\left(\frac{c_1^2(\xi-h)}{fa_2 \cdots a_{|I|}}\prod_{\substack{\{1\} \subsetneq S \subseteq I }} g_S\right)
        \prod_{i=2}^{|I|} v_i \Bigg(c_i^2a_i \prod_{\substack{\{i\} \subsetneq S \subseteq I }} g_S\Bigg),
    \end{align*}
    and analogously for $\mathcal W_{\mathbf b}(\xi,\eta)$, and
    \begin{align*}
        C'_{\delta_1,\delta_2}(h;q,\mathbf a,\mathbf b)
 :=
 \sum_{d_1,d_2\geq 1}
 \frac{
        \mu^2(d_1)\mu^2(d_2)c_{d_1}(h)c_{d_2}(h)
      }{
        d_1^{1+\delta_1}d_2^{1+\delta_2}
        \operatorname{lcm}
        \left(q,\frac{d_1}{(d_1,a_2 \cdots a_{|I|})},\frac{d_2}{(d_2,b_2 \cdots b_{|I|})}\right)
      } .
    \end{align*}
    Observe that \eqref{sum1},\eqref{sum2},\eqref{sum3} together imply
    \begin{align*}
        R_j' &\ll \frac{x^{1+\e}}{q\widetilde{A}_j^{\frac12}}+q^{-1}\widetilde{A}_j^{\frac34}x^{\frac34+\e} \left( |h|^{\frac{\theta}{2}} \widetilde{A}_j^{\frac{\theta}{2}}+x^{\frac\theta2} \right),
    \end{align*}
    from which (\ref{rem3}) follows after summing over $(j_r)_{r \in I}$, $(c_i)_{i \in I}$, $(g_S)_S$ outside, which additionally contributes $\ll x^{\e}$. To prove (\ref{sum1}) and (\ref{sum2}), recall the quantity $\Phi_0$ in Topacogullari \cite{topacogullari2018shifted}, and note the quantities $\Phi_{c0}'$ and $\Phi_0$ are analogous. Therefore, the same argument that handles $\Sigma_1$ and $\Sigma_2$ in Topacogullari \cite{topacogullari2018shifted} also proves (\ref{sum1}) and (\ref{sum2}). To prove (\ref{sum3}), we write
    \[
    \Sigma_3'= \sum_{\substack{a_i,a_i', \, i \in I\\ \mu^2(\prod_{i \in I} a_i)=\mu^2(\prod_{i \in I}a_i')=1\\(\prod_{i \in I} a_ia_i',\prod_{\substack{S}} g_S)=1}} \prod_{i \in I} v_i(a_i) v_i(a_i') \Sigma_{3a}(qa_1 \cdots a_{|I|},qa_1' \cdots a_{|I|}'),
    \]
    where the product over $S$ is taken over all $S \subseteq I$ with $|S| \ge 2$, and $\Sigma_{3a}(r_1,r_2)$ is defined by
    \[
    \Sigma_{3a}(r_1,r_2)=\sum_b w \left( \frac{r_1b}{x} \right) w \left( \frac{r_2b}{x} \right) d(r_1b+h)d(r_2b+h).
    \]
    For $a_1 \cdots a_{|I|} \neq a_1' \cdots a_{|I|}'$, this corresponds to the sum considered in Theorem \ref{thm:certaindivisor}. In Topacogullari \cite{topacogullari2018shifted}, the proof instead relied on Theorem 1.1 of Topacogullari \cite{topacogullari2015certain}. Since the main term in Theorem \ref{thm:certaindivisor} is also quadratic in $\log x$, our main term coincides with theirs. Consequently, the proof of the main term in (\ref{sum3}) is analogous to that in Topacogullari \cite{topacogullari2018shifted}, and we only need to handle the error term, which we denote by $R_3$. First, we split
    \[
    R_3 \ll \sum_{\substack{r \asymp \widetilde{A}_{j}\\r \equiv 0 \Mod{q}}} d_{|I|}(r)^2 |\Sigma_{3a} (r,r)|+\sum_{\substack{a_i,a_i' \asymp \widetilde{A}_{j,i} \, \forall i \in I,\\ \prod a_i \neq \prod a_i',\\
    \prod a_i,\prod a_i' \text{ squarefree}}} |R_{3a}(qa_1 \cdots a_{|I|},qa_1 \cdots a_{|I|}')|,
    \]
    where $R_{3a}$ is the error term estimating $\Sigma_{3a}$ using Theorem \ref{thm:certaindivisor}. Trivially, the first sum is $\ll x^{1+\e}/q$. For the second sum, since everything in the summand is non-negative, first write
    \[
    \sum_{\substack{a_i,a_i' \asymp \widetilde{A}_{j,i}, \, i \in I\\ a_1 \cdots a_{|I|} \neq a_1' \cdots a_{|I|}'\\ \prod a_i,\prod a_i' \text{ squarefree}}} (\cdots) \ll \sum_{\substack{r_1,r_2 \asymp \widetilde{A}_{j}\\r_1,r_2 \text{ squarefree}\\q \mid (r_1,r_2)}} d_{|I|}(r_1)d_{|I|}(r_2) (\cdots).
    \]
    Since the divisor functions contribute at most $\widetilde{A}_{j}^\e \ll x^\e$, we ignore them. We write $r_1=r_0c_1$ and $r_2=r_0c_2$. Therefore,
    \begin{align*}
    &\sum_{\substack{r_1,r_2 \asymp \widetilde{A}_{j}\\ r_1,r_2 \text{ squarefree}\\q \mid (r_1,r_2)}} x^{\frac12+O(\eta)} r_0^\e r_2^{\frac12+\e}\gcd \left( \widetilde{r}_2-\widetilde{r}_1,r_0^\infty \right)^{1+\e}\left( \frac{|h|^\theta |r_2-r_1|^\theta}{r_0^\theta}+\frac{x^\theta}{r_0^\theta} \right)\\
    &\ll \sum_{\substack{r_0 \ll \widetilde{A}_{j}\\q \mid r_0}} \sum_{c_1,c_2 \asymp \widetilde{A}_{j}/r_0} x^{\frac12+O(\eta)}r_0^{\frac12+\e}c_2^{\frac12+\e} \gcd(c_2-c_1,r_0^\infty)^{1+\e} \left( \frac{|h|^\theta \widetilde{A}_{j}^\theta}{r_0^\theta}+\frac{x^\theta}{r_0^\theta} \right)\\
    &\ll \sum_{\substack{r_0 \ll \widetilde{A}_{j}\\q \mid r_0}} \sum_{\substack{d \mid r_0^\infty\\ d \ll \widetilde{A}_{j}/r_0}} \sum_{\substack{c_1,c_2 \asymp \widetilde{A}_{j}/r_0\\ c_1 \equiv c_2 \Mod{d}}} x^{\frac12+O(\eta)}r_0^{\frac12+\e}c_2^{\frac12+\e}d^{1+\e} \left( \frac{|h|^\theta \widetilde{A}_{j}^\theta}{r_0^\theta}+\frac{x^\theta}{r_0^\theta} \right)\\
    &\ll \sum_{\substack{r_0 \ll \widetilde{A}_{j}\\q \mid r_0}} \sum_{\substack{d \mid r_0^\infty\\ d \ll \widetilde{A}_{j}/r_0}} \sum_{c_2 \asymp \widetilde{A}_{j}/r_0} x^{\frac12+O(\eta)} r_0^{-\frac12+\e}c_2^{\frac12+\e}\widetilde{A}_{j}d^\e \left( \frac{|h|^\theta \widetilde{A}_{j}^\theta}{r_0^\theta}+x^\theta \right)\\
    &\ll \sum_{\substack{r_0 \ll \widetilde{A}_{j}\\q \mid r_0}} \sum_{\substack{d \mid r_0^\infty\\ d \ll \widetilde{A}_{j}/r_0}} x^{\frac12+O(\eta)}r_0^{-2+2\e}\widetilde{A}_{j}^{\frac52+\e}d^{\e}  \left( \frac{|h|^\theta \widetilde{A}_{j}^\theta}{r_0^\theta}+x^\theta \right).
    \end{align*}
    Using $d(r_0)=2^{\omega(r_0)}$, the inner sum is bounded by
    \begin{align*}
        \sum_{\substack{d \mid r_0^\infty\\ d \ll \widetilde{A}_{j}/r_0}} d^\e
        &\ll \widetilde{A}_{j}^\e \sum_{\substack{d \mid r_0^\infty\\ d \ll \widetilde{A}_{j}/r_0}} \frac{\widetilde{A}_{j}^\e}{d^\e} \le \widetilde{A}_{j}^{2\e} \prod_{p \mid r_0} \left( 1+p^{-\e}+p^{-2\e}+\cdots \right)
        \leq \widetilde{A}_{j}^{2\e} \prod_{p \mid r_0} \left(1-p^{-\e}\right)^{-1}
        \leq \widetilde{A}_{j}^{2\e} d(r_0)^{-\frac{\log (1-2^{-\e})}{\log 2}},
    \end{align*}
    which is $\ll_\e x^{3\e}$ by the divisor bound. Therefore,
    \begin{align*}
        R_3 &\ll_\e q^{-1}x^{1+\e}+\widetilde{A}_{j}^{\frac52+\e} x^{\frac12+O(\eta)} \left(|h|^\theta \widetilde{A}_{j}^\theta+x^\theta \right) \sum_{\substack{r_0 \ll \widetilde{A}_{j}\\q \mid r_0}} r_0^{-2+2\e}\\
        &\ll_\e q^{-1}x^{1+\e}+ q^{-1}\widetilde{A}_{j}^{\frac52+\e} x^{\frac12+O(\eta)} \left(|h|^\theta \widetilde{A}_{j}^\theta+x^\theta \right).
    \end{align*}
    This gives the required error term since $\widetilde{A}_{j} \ll x$, therefore proving \eqref{sum3} and thus (\ref{rem3}) as well.
\end{proof}
To use Lemma \ref{lem:remainderbound}, we prove that any factorisation into $A_1,\ldots,A_k$ can be partitioned into three admissible cases.
\begin{lem} \label{lem:partition}
    Let $k \ge 4$. Suppose $\alpha_1,\ldots,\alpha_k \in [0,1]$ satisfies $\alpha_1+\cdots+\alpha_k=1$ and $\alpha_1 \geq \alpha_2 \geq \cdots \geq \alpha_k$. Then, for any $\delta \leq \frac{1}{16}$, at least one of the following holds:
    \begin{enumerate}[(A).]
        \item $\alpha_1 \geq \frac13+\frac23 \delta$.
        \item $\alpha_1+\alpha_2 \geq \frac12+\delta$.
        \item There exists a non-empty index set $I \subseteq \{1,2,\ldots,k\}$ such that $2 \delta \leq \sum_{i \in I} \alpha_i \leq \frac13-\frac43 \delta$.
    \end{enumerate}
\end{lem}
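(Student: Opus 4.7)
The plan is to argue by contradiction: assume both (A) and (B) fail, so that $\alpha_1 < \tfrac13 + \tfrac23\delta$ and $\alpha_1 + \alpha_2 < \tfrac12 + \delta$, and construct an $I$ satisfying (C). The first reduction is immediate: if some single index $i$ has $\alpha_i \in [2\delta, \tfrac13 - \tfrac43\delta]$, take $I = \{i\}$. Otherwise every $\alpha_i$ is either \emph{small} ($\alpha_i < 2\delta$) or \emph{large} ($\alpha_i > \tfrac13 - \tfrac43\delta$), and I denote by $J$ the set of large indices.

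Using $\delta \leq \tfrac{1}{16}$, the threshold $\tfrac13 - \tfrac43\delta \geq \tfrac14$, so at most three of the $\alpha_i$ can be large and $|J| \in \{0,1,2,3\}$. I next lower-bound the \emph{small mass} $T := \sum_{i \notin J}\alpha_i = 1 - \sum_{i \in J}\alpha_i$ in each case, using the ordering $\alpha_1 \geq \alpha_2 \geq \cdots$ and the failure of (A), (B). For $|J| \leq 2$ one obtains $T > \tfrac12 - \delta$ directly from the hypotheses. For $|J| = 3$, combining $\alpha_3 \leq \alpha_2 \leq \tfrac12(\alpha_1 + \alpha_2) < \tfrac14 + \tfrac{\delta}{2}$ with the failure of (B) yields $T > \tfrac14 - \tfrac{3\delta}{2}$. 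In each case $T > 2\delta$ under $\delta \leq \tfrac{1}{16}$.

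The final step is a greedy partial-sum argument: since each small $\alpha_i$ is strictly less than $2\delta$ and their total $T$ exceeds $2\delta$, I enumerate the small indices in any order and take $I$ to be the shortest initial segment whose partial sum reaches $2\delta$. By construction this $I$ is non-empty and $\sum_{i \in I}\alpha_i \in [2\delta, 4\delta)$. The constraint $\delta \leq \tfrac{1}{16}$ is precisely equivalent to $4\delta \leq \tfrac13 - \tfrac43\delta$, so this sum lies in the window required by (C).

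The hard part is the case $|J| = 3$, which is essentially sharp: both the lower bound $T \geq 2\delta$ (which needs $\delta \leq \tfrac{1}{14}$) and the inclusion $[2\delta, 4\delta) \subseteq [2\delta, \tfrac13 - \tfrac43\delta]$ (which needs $\delta \leq \tfrac{1}{16}$) are active simultaneously, and the stated hypothesis $\delta \leq \tfrac{1}{16}$ is calibrated exactly to satisfy the stricter of these two.
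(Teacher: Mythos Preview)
Your proof is correct, and it is organised differently from the paper's. The paper assumes (A) and (C) fail and then forces (B); you assume (A) and (B) fail and then construct an $I$ for (C). The underlying arithmetic is the same---both arguments ultimately rest on the greedy partial-sum step (if every remaining $\alpha_i<2\delta$ and the residual mass exceeds $2\delta$, some initial segment lands in $[2\delta,4\delta)\subseteq[2\delta,\tfrac13-\tfrac43\delta]$)---but your packaging via the large/small dichotomy and the count $|J|\in\{0,1,2,3\}$ is cleaner. In the paper the analysis proceeds through a chain of nested sub-cases on $\alpha_2$ and then on the pair $(\alpha_1,\alpha_2)$ against the thresholds $\tfrac13-\tfrac13\delta$ and $\tfrac13-\tfrac56\delta$, each time invoking failure of (C) to reach a contradiction; your version replaces this with the single observation $\alpha_3\le\alpha_2\le\tfrac12(\alpha_1+\alpha_2)<\tfrac14+\tfrac\delta2$, which in one line yields $T>\tfrac14-\tfrac{3\delta}{2}>2\delta$. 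Your approach also makes transparent where the hypothesis $\delta\le\tfrac1{16}$ enters (namely $4\delta\le\tfrac13-\tfrac43\delta$ and, as you note, the $|J|=3$ bound on $T$), whereas in the paper it appears only at the final comparison $\tfrac23-\tfrac53\delta\ge\tfrac12+\delta$.
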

\begin{proof}
    Suppose (A) and (C) do not hold. Then, $\alpha_1<\frac13+\frac23 \delta$ and
    \[
    \alpha_1+\alpha_2>\frac13-\frac43 \delta \quad \text{ or } \quad \alpha_1+\alpha_2<2 \delta.
    \]
    If $\alpha_1+\alpha_2<2\delta$, then $\alpha_1<2\delta$, and so $\alpha_i<2 \delta$ for all $i=1,2,\ldots,k$. Since $\delta<\frac{1}{16}$, we have $\left( \frac13-\frac43 \delta \right)-2\delta>2\delta.$ Since $\alpha_1+\cdots+\alpha_k=1>\frac13-\frac43 \delta$, we get a contradiction since there exists $1 \leq J \leq k$ such that $$2 \delta \leq \sum_{1 \leq j \leq J} \alpha_j \leq \frac13-\frac43 \delta.$$
    Therefore $\alpha_1+\alpha_2>\frac13-\frac43 \delta$. Note also $\alpha_2$ does not satisfy (C). If $\alpha_2<2 \delta$, then $\alpha_3,\ldots,\alpha_k<2 \delta$. Since
    $
    \alpha_2+\cdots+\alpha_k=1-\alpha_1>\frac23-\frac23 \delta>\frac13-\frac43 \delta
    $
    and $(\frac13-\frac43 \delta)-2 \delta>2 \delta$, again we have a contradiction since there exists $2 \leq J \leq k$ such that
    \[
    2 \delta \leq \sum_{2 \leq j \leq J} \alpha_j \leq \frac13-\frac43 \delta.
    \]
    Thus we must have $\alpha_2>\frac13-\frac43 \delta$, so $\alpha_1,\alpha_2>\frac13-\frac43 \delta$, so $\alpha_3+\cdots+\alpha_k<\frac13+\frac83 \delta$. If $\alpha_1 \leq \frac13-\frac13 \delta$ and $\alpha_2 \leq \frac13-\frac56 \delta$, then $\alpha_3 \leq \alpha_2$ implies $\alpha_1+\alpha_2+\alpha_3 \leq 1-2 \delta$, so $\alpha_4+\cdots+\alpha_k \geq 2\delta$. By failure of (C), we must have $\alpha_4+\cdots+\alpha_k>\frac13-\frac43 \delta$. But then this implies $\alpha_3<(\frac13+\frac83 \delta)-(\frac13-\frac43 \delta)=4\delta$, so $\alpha_3<2 \delta$ by failure of (C). Therefore, $\alpha_4,\ldots,\alpha_k<2 \delta$ but we get a contradiction as before since $\alpha_4+\cdots+\alpha_k>\frac13-\frac43 \delta.$ Thus we must have $\alpha_1>\frac13-\frac13 \delta$ or $\alpha_2>\frac13-\frac56 \delta$. We have two cases.
    \begin{itemize}
        \item Case 1: $\alpha_1>\frac13-\frac13 \delta$. Then, we are done since
        \[
        \alpha_1+\alpha_2>\left( \frac13-\frac13 \delta \right)+ \left(\frac13-\frac43 \delta \right)=\frac23-\frac53 \delta \geq \frac12+\delta
        \]
        for $\delta \leq \frac{1}{16}$.
        \item Case 2: $\alpha_2>\frac13-\frac56 \delta$. Then, $\alpha_1 \geq \alpha_2$, so we are done since
        \[
        \alpha_1+\alpha_2>2 \left( \frac13-\frac56 \delta \right)=\frac23-\frac53 \delta \geq \frac12+\delta
        \]
        for $\delta \leq \frac{1}{16}$.
    \end{itemize}
    Therefore, at least one of (A), (B) or (C) must hold, and we are done.
\end{proof}
\begin{proof}[Proof of Theorem \ref{thm:generaliseddivisor}]
    We first treat the main term. Here, our method diverges from Topacogullari \cite{topacogullari2018shifted} since their argument is not sufficient for large $k$. Let $v:(0,\infty) \to [0,\infty)$ be a smooth function compactly supported on $[1,2]$, such that
\[
\sum_{j \in \mathbb{Z}} v \left( \frac{2^ju}{x^{\frac1k}} \right)=1
\]
for any $x \in \mathbb{R}^+$, and let $v_j(u) := v(2^ju/x^\frac1k)$. Since there must exist at least one $a_i$ with $a_i \leq x^{\frac1k}$, using Lemma \ref{lem:remainderbound} without loss of generality it suffices to compute
\begin{align*}
    \sum_{j \in \mathbb{N}} \int_\mathbb{R} &w \left( \frac{u}{x} \right) \sum_{\substack{a_2,\ldots,a_k\\ d \mid a_2 \cdots a_k}} \frac{1}{a_2 \cdots a_k} v_j \left( \frac{u}{a_2 \cdots a_k} \right) \lambda_{h,d}(u+h) \d u,
\end{align*}
Let $g,g':\mathbb{N}^2 \to \mathbb{R}$ be the functions
\[
g(m,h) := \sum_{d \mid m} \frac{c_d(h)}{d}, \quad g'(m,h) := \sum_{d \mid m} \frac{c_d(h)\log d}{d},
\]
and it suffices to estimate $\sum_j (Q_{1,j}+2\gamma Q_{2,j}-2Q_{3,j})$, where
\begin{align*}
    Q_{1,j} &:=\int_{\mathbb{R}} v_j(u) \sum_m d_{k-1}(m) g(m,h) w \left( \frac{um}{x} \right) \log(um+h) \d u,\\
    Q_{2,j} &:= \int_{\mathbb{R}} v_j(u) \sum_m d_{k-1}(m)g(m,h)w \left( \frac{um}{x} \right) \d u,\\
    Q_{3,j} &:= \int_{\mathbb{R}} v_j(u) \sum_m d_{k-1}(m)g'(m,h) w \left( \frac{um}{x} \right) \d u.
\end{align*}
We treat $Q_{1,j}$ first. Let $W:(0,\infty) \to \mathbb{R}$ be given by
\[
W(\xi) := w \left( \frac{\xi}{x} \right) \log(\xi+h),
\]
and note $W$ is a smooth and compactly supported function, with Mellin transform
\[
\widetilde{W}(s)=\int_{\mathbb{R}}w \left( \frac{\xi}{x} \right) \log(\xi+h) \xi^{s-1} \d \xi.
\]
We mention that it is important for the weight function $w$ to be smooth, since we require the Mellin transform to decay rapidly. By Mellin inversion, for any $\sigma>1$ we have
\begin{align*}
    Q_{1,j} &=\frac{1}{2 \pi i}\int_{\mathbb{R}} v_j(u) \int_{\sigma-i\infty}^{\sigma+i\infty} \widetilde{W}(s)\sum_m \frac{d_{k-1}(m) g(m,h)}{u^sm^s} \d s \d u\\
    &=: \frac{1}{2 \pi i}\int_{\mathbb{R}} v_j(u) \int_{\sigma-i\infty}^{\sigma+i\infty} u^{-s} \widetilde{W}(s) D_h(s) \d s \d u.
\end{align*}
Now, the main term of $Q_{1,j}$ will be the residue of $D_h(s)$ at $s=1$, so we wish to use the residue theorem to move the line of integration. We investigate the poles of $D_h(s)$ by expanding into Euler products. First, from Schwarz and Spilker \cite{schwarz1994arithmetical}, $g$ is multiplicative and for all $\alpha \geq 1$ we have
\[
g(p^\alpha,h)=\sum_{0 \leq k \leq \alpha} \frac{c_{p^k}(h)}{p^k}.
\]
If $(p,h)=1$, then
\[
c_{p^k}(h) = \begin{cases}
    1, &\quad \text{ for }k=0,\\
    -1, &\quad \text{ for }k=1,\\
    0, &\quad \text{ for }k \geq 2,
\end{cases}
\]
therefore in this case $g(p^\alpha,h)=1-\frac1p$, and $g(1,h)=1$. If $p \mid h$, then
\[
c_{p^k}(h)=\mu \left( \frac{p^k}{(p^k,h)} \right) \frac{\varphi(p^k)}{\varphi \left( \frac{p^k}{(p^k,h)} \right)}.
\]
Therefore, for $\operatorname{Re}(s)>1$ we have
\begin{align*}
    D_h(s) &= \sum_m \frac{d_{k-1}(m) g(m,h)}{m^s}\\
    &= \prod_p \sum_{\alpha \geq0} \frac{d_{k-1}(p^\alpha)g(p^\alpha,h)}{p^{\alpha s}}\\
    &= \prod_{p \nmid h} \left( 1+\left( 1-\frac1p \right)\sum_{\alpha \geq 1} \frac{d_{k-1}(p^\alpha)}{p^{\alpha s}} \right) \underbrace{\prod_{p \mid h} \sum_{\alpha \geq0} \frac{d_{k-1}(p^\alpha)g(p^\alpha,h)}{p^{\alpha s}}}_{=: E_h(s)}\\
    &= E_h(s) \prod_{p \nmid h} \left( \frac1p+\left( 1-\frac{1}{p^s} \right)^{-(k-1)} \left( 1-\frac1p \right) \right).
\end{align*}
Here $E_h(s)$ is a finite product of non-vanishing holomorphic functions, so we focus mainly on the infinite product. Now observe
\begin{align*}
    P_h(s) := \zeta(s)^{-(k-1)}D_h(s) &= E_h(s) \prod_{p \mid h} \left( 1-\frac{1}{p^s} \right)^{k-1}\prod_{p \nmid h} \left( \frac1p\left(1-\frac{1}{p^s} \right)^{k-1}+1-\frac1p \right)\\
    &= E_h(s) \prod_{p \mid h} \left( 1-\frac{1}{p^s} \right)^{k-1} \prod_{p \nmid h} \left( 1-\frac{k-1}{p^{s+1}}+O(p^{-2s-1}) \right).
\end{align*}
Note $P_h(s)$ is holomorphic for $\operatorname{Re}(s)>1/2$, and so $D_h(s)$ has a pole of order $k-1$ at $s=1$. Using the residue theorem, for $\sigma'=\frac12+\e$ we have
\[
Q_{1,j}=\int_{\mathbb{R}} v_j(u) \operatorname{Res}_{s=1} u^{-s}\widetilde{W}(s)D_h(s) \d u+\frac{1}{2 \pi i} \int_{\mathbb{R}} v_j(u) \int_{\sigma'-i\infty}^{\sigma'+i\infty} u^{-s} \widetilde{W}(s) D_h(s) \d s \d u.
\]
Note
\[
u^{-s} \widetilde{W}(s)D_h(s)=\int_\mathbb{R} \frac1\xi w\left( \frac{\xi}{x} \right) \log(\xi+h) \left[ \left( \frac{\xi}{u} \right)^sD_h(s)  \right] \d \xi,
\]
and observe
\[
\operatorname{Res}_{s=1} \left( \frac{\xi}{u} \right)^sD_h(s)=\frac{\xi}{u} P_{k-2}(\log \xi,\log u),
\]
where $P_{k-2}(X,Y)$ is a polynomial of degree $k-2$. Thus the main term of $Q_{1,j}$ is
\[
\int_\mathbb{R} w \left( \frac{\xi}{x} \right) \log(\xi+h) \int_\mathbb{R} \frac{v_j(u)}{u}P_{k-2}(\log \xi,\log u) \d u \d \xi.
\]
If we sum over $j$ and bring the summation inside, we get the desired main term
\[
\int_\mathbb{R} w \left( \frac{\xi}{x} \right) P_k(\log x,\log \xi,\log(\xi+h)) \d \xi.
\]
It remains to bound the error term. Using decay estimates for $\widetilde{W}(s)$ and $|\zeta(\sigma'+it)| \ll |t|^{\frac12+\e}$ for $t \ge 1$, we have
\[
\left|\sum_j Q_{1,j}-\int_\mathbb{R} w \left( \frac{\xi}{x} \right) P_k(\log x,\log \xi,\log(\xi+h)) \d \xi\right| \ll x^{\frac12+\frac{1}{2k}+\e}.
\]
A very similar argument applies to $Q_{2,j}$ and $Q_{3,j}$. We omit the argument for $Q_{2,j}$, but we mention key modifications in the argument for $Q_{3,j}$ since $g'(\cdot,h)$ is not multiplicative. For $\beta \in \mathbb{R}$, let
\[
g_\beta(m,h) := \sum_{d \mid m} \frac{c_d(h)}{d^\beta},
\]
Then,
\[
g'(m,h)=-\frac{\partial}{\partial \beta} g_\beta(m,h) \Biggr|_{\beta=1},
\]
and so for $\operatorname{Re}(s)>1$, we have
\[
\widetilde{D}_h(s) := \sum_m \frac{d_{k-1}(m)g'(m,h)}{m^s} = -\frac{\partial}{\partial \beta} \left( \sum_m \frac{d_{k-1}(m)g_\beta(m,h)}{m^s} \right) \Biggr|_{\beta=1},
\]
and let $D_{h,\beta}(s)$ be the inner sum. As before, we analyse the poles of $D_{h,\beta}(s)$. Analogous to $g$, it can be shown that $g_\beta(\cdot,h)$ is multiplicative with analogous properties as $g$, in particular
\[
g_\beta(1,h)=1, \quad g_\beta(p^\alpha,h)=1-\frac{1}{p^\beta}
\]
for $\alpha \geq 1$ and $p \nmid h$. Therefore, as before we have
\[
D_{h,\beta}(s)=E_{h,\beta}(s) \prod_{p \nmid h} \left( \frac1{p^\beta}+\left( 1-\frac{1}{p^s} \right)^{-(k-1)} \left( 1-\frac{1}{p^\beta} \right) \right),
\]
where $E_{h,\beta}(s)$ is a finite product of non-vanishing holomorphic functions. Taking the logarithmic derivative, we have
\[
\frac{1}{D_{h,\beta}(s)}\frac{\partial}{\partial \beta} D_{h,\beta}(s) \Biggr|_{\beta=1}=\frac{1}{E_{h,\beta}(s)}\frac{\partial}{\partial \beta} E_{h,\beta}(s)\Biggr|_{\beta=1}-\sum_{p \nmid h}\frac{\log p}{p} \cdot \frac{1-\left( 1-\frac{1}{p^s} \right)^{-(k-1)}}{\frac1p+\left( 1-\frac{1}{p^s} \right)^{-(k-1)} \left(1-\frac1p \right)}
\]
The right hand side is analytic for $\operatorname{Re}(s)>\frac12$, so $\widetilde{D}_h(s)$ has an order $k-1$ pole at $s=1$. The rest of the argument follows analogously to above, contributing a degree $k-1$ polynomial. Combining all contributions from $Q_{1,j},Q_{2,j},Q_{3,j}$, we proved the main term is
    \[
    \int_\mathbb{R} w \left( \frac{\xi}{x} \right) P_k(\log x,\log \xi,\log(\xi+h)) \d \xi+O_{w,\e,k}(x^{\frac12+\frac{1}{2k}+\e}).
    \]
To treat the required error term from Lemma \ref{lem:remainderbound}, the idea is to dyadically split the sum over $a_1 \cdots a_k \asymp x$ to a sum over $a_i \asymp A_i$ with $A_1 \cdots A_k \asymp X$, and we use the partition in Lemma \ref{lem:partition} along with (\ref{rem1}), (\ref{rem2}), and (\ref{rem3}). We set $A_1 \geq A_2 \geq \cdots \geq A_k$, and let
    \[
    X_1=x^{\frac13+\frac23 \delta}, \quad X_2=x^{\frac12+\delta}, \quad X_3=x^{\frac13-\frac43 \delta}, \quad X_4=x^{2 \delta}.
    \]
    If $A_1 \gg X_1$, then use (\ref{rem1}) to get
    \[
    R_{v_1} \ll x^{1-\delta+\e}.
    \]
    If $A_1A_2 \gg X_2$, then use (\ref{rem2}) to get
    \[
    R_{v_1} \ll x^{1-\delta+\e}\left( 1+x^{2 \theta \delta} \right)\left(1+1 \right) \left( 1+\frac{|h|^{\frac{1}{4}}}{x^{\frac14-\frac12 \delta}} \right) \ll x^{1-\delta+2 \delta \theta+\e}\left( 1+\frac{|h|^{\frac{1}{4}}}{x^{\frac14-\frac12 \delta}} \right).
    \]
    Otherwise, there is a non-empty index set $I \subseteq \{1,2,\ldots,k\}$ such that $X_4 \ll \prod_{i \in I} A_i \ll X_3$. For convenience let $A=\prod_{i \in I} A_i$, so using (\ref{rem3}), we get
    \begin{align*}
        R_{v_1} &\ll \frac{x^{1+\e}}{X_4^{\frac12}}+X_3^{\frac34} x^{\frac34+\e}\left( |h|^{\frac{\theta}{2}}X_3^{\frac{\theta}{2}}+{x^{\frac{\theta}{2}}} \right)\\
        &=x^{1-\delta+\e}+x^{1-\delta+\e}(|h|^{\frac{\theta}{2}} x^{\frac{\theta}{6}-\frac{2\delta}{3} \theta}+x^{\frac{\theta}{2}})\\
        &\ll x^{1-\delta+\frac{\theta}{2}+\e} \left(1+\frac{|h|^{\frac{\theta}{2}}}{x^{\frac{\theta}{3}+\frac{2 \delta}{3} \theta}}\right).
    \end{align*}
    Combining the above bounds, we are done.
\end{proof}
\begin{cor} \label{cor:generaliseddivisor}
    Let $k \ge 4$, $w:[1/2,1] \to [0,\infty)$ be a smooth and compactly supported function, and $x \in \mathbb{R}^+$. Then for $0 \leq \delta \leq \frac{1}{16}$ we have the following estimates for different ranges of $h \in \mathbb{N}$.
    \begin{enumerate}
        \item For $|h| \leq x^{\frac23+\frac{4 \delta}{3}}$, we have
        \[
        \sum_n w \left( \frac{n}{x} \right) d_k(n) d(n \pm h)=xP_{k,h,w}(\log x)+O_{k,w,\delta,\e}(x^{1-\delta+\frac{\theta}{2}+\e}),
        \]
        where $P_{k,h,w}$ is a polynomial of degree $k$ depending only on $k,h,w$.
        \item For $x^{\frac23+\frac{4 \delta}{3}} \le |h| \le x^{1-\e}$, we have
        \[
        \sum_n w \left( \frac{n}{x} \right) d_k(n) d(n \pm h)=\int_\mathbb{R} w \left( \frac{\xi}{x} \right) P_{k,h}(\log x,\log \xi,\log(\xi+h)) \d \xi+O_{k,w,\delta,\e}(|h|^{\frac{\theta}{2}}x^{1-\delta+\frac{\theta}{6}-\frac{2 \delta}{3}\theta+\e}),
        \]
        where $P_{k,h}$ is a polynomial of degree $k$ depending only on $k$ and $h$.
    \end{enumerate}
\end{cor}
We can choose $\delta=1/16$ and $\theta \leq 7/64$ from Kim and Sarnak \cite{kim2003functoriality}. If we have $\theta=0$ from the Ramanujan-Petersson conjecture, then all ranges above can be used and are all non-trivial.
\begin{cor} \label{cor:generaliseddivisorsmall}
    Let $k \ge 4$, $w:[1/2,1] \to [0,\infty)$ be a smooth and compactly supported function, and $x \in \mathbb{R}^+$. Then we have the following estimates for different ranges of $h \in \mathbb{Z}$.
    \begin{enumerate}[(a)]
        \item For $|h| \leq x^{\frac34}$, we have
        \[
        \sum_n w \left( \frac{n}{x} \right) d_k(n) d(n + h)=xP_{k,h,w}(\log x)+O_{k,w,\e}(x^{\frac{15}{16}+\frac12 \theta+\e}),
        \]
        where $P_{k,h,w}$ is a polynomial of degree $k$ depending only on $k,h,w$.
        \item For $x^{\frac34} \leq |h| \leq x^{1-\e}$, we have
        \[
        \sum_n w \left( \frac{n}{x} \right) d_k(n) d(n + h)=\int_\mathbb{R} w \left( \frac{\xi}{x} \right) P_{k,h}(\log x,\log \xi,\log(\xi+h)) \d \xi+O_{k,w,\e}(|h|^{\frac{\theta}{2}}x^{\frac{15}{16}+\frac{1}{8} \theta+\e}),
        \]
        where $P_{k,h}$ is a polynomial of degree $k$ depending only on $k$ and $h$.
    \end{enumerate}
\end{cor}
Using $\theta \leq 7/64$ from Kim and Sarnak \cite{kim2003functoriality}, the bound from $|h| \ge x^{\frac34}$ is only nontrivial when $|h| \leq x^{\frac{25}{28}-\e}$. Since $\frac{15}{19}<\frac{25}{28}$, Theorem \ref{thm:intromainthm} is an improvement to Topacogullari \cite[Theorem 1.2]{topacogullari2018shifted} when $k$ is large.
\setcounter{section}{1}
\setcounter{thm}{0}
\begin{thm}
    Let $k \ge 4$, and $w:[1/2,1] \to [0,\infty)$ be smooth and compactly supported. Let $h$ be a non-zero integer such that $|h| \ll x^{\frac{25}{28}-\eta}$ for some $0<\eta<25/28$. Then,
    \[
    \sum_n w\left( \frac{n}{x} \right) d_k(n) d(n + h)=xP_{k,h,w}(\log x)+O_{k,w,\e}(x^{1-\frac{7}{128} \eta+\e}+x^{\frac{127}{128}+\e}),
    \]
    where $P_{k,h,w}$ is a polynomial of degree $k$ depending only on $k,h,w$.
\end{thm}
\setcounter{section}{6}

\vfill
\setcitestyle{numbers}
\bibliographystyle{apalike}
\bibliography{bibliography.bib}
\end{document}